\documentclass{amsart}

\usepackage{amssymb,amsthm}

\usepackage{a4wide}

\usepackage{graphicx}
\usepackage{hyperref}
\usepackage{color}

\title{A new $q$-Selberg integral, Schur functions, and Young books}
\author{Jang Soo Kim and Soichi Okada}
\thanks{}
\keywords{}
\date{\today}

\newtheorem{thm}{Theorem}[section]
\newtheorem{lem}[thm]{Lemma}
\newtheorem{prop}[thm]{Proposition}

\theoremstyle{definition}

\theoremstyle{remark}
\newtheorem{remark}{Remark}[section]


\newcommand{\YB}{\mathrm{YB}}    
\newcommand{\maj}{\mathrm{maj}}  
\newcommand{\Par}{\mathrm{Par}}  
\newcommand{\A}{\mathcal{A}}     
\newcommand{\LL}{\mathcal{L}}    
\newcommand{\N}{\mathbb{N}}      
\newcommand{\Z}{\mathbb{Z}}      

\newcommand\Sym{\mathfrak{S}}
\newcommand\vectr{\boldsymbol{r}}
\newcommand\vects{\boldsymbol{s}}

\setcounter{MaxMatrixCols}{20}

\begin{document}

\begin{abstract}
  Recently, Kim and Oh expressed the Selberg integral in terms of the
  number of Young books which are a generalization of standard Young
  tableaux of shifted staircase shape. In this paper the generating
  function for Young books according to major index statistic is
  considered. It is shown that this generating function can be written
  as a Jackson integral which gives a new $q$-Selberg integral.  It is
  also shown that the new $q$-Selberg integral has an expression in
  terms of Schur functions.
\end{abstract}

\maketitle

\section{Introduction}

For a positive integer $n$, and complex numbers 
$\alpha$, $\beta$, $\gamma$ with $\mathrm{Re} (\alpha) > 0$, 
$\mathrm{Re} (\beta) > 0$, and $\mathrm{Re} (\gamma) >
- \min \{ 1/n, \mathrm{Re}(\alpha)/(n-1), \mathrm{Re}(\beta)/(n-1)\}$, 
the Selberg integral $S_n(\alpha,\beta,\gamma)$ is defined by 
\begin{align}
\label{eq:Selberg}
S_n(\alpha,\beta,\gamma)
 &= 
\int_0^1 \cdots \int_0^1
  \prod_{i=1}^n x_i^{\alpha-1} (1-x_i)^{\beta-1}
  \prod_{1 \le i < j \le n} |x_i-x_j|^{2\gamma} dx_1 \cdots dx_n
\\ \notag
 &= 
\prod_{j=1}^n
  \frac{ \Gamma(\alpha+(j-1)\gamma)
         \Gamma(\beta+(j-1)\gamma)\Gamma(1+j\gamma) }
       { \Gamma(\alpha+\beta+(n+j-2)\gamma)\Gamma(1+\gamma) }.
\end{align}
The Selberg integral is a generalization of Euler's beta integral, 
which is $S_1(\alpha,\beta,0)$. 
The formula above is due to Selberg \cite{Selberg1944}.
There are many generalizations of the Selberg integral, see \cite{Forrester}.
As a $q$-analogue of (\ref{eq:Selberg}), Askey \cite{Askey} 
conjectured that
\begin{multline}
\label{eq:Askey}
\int_0^1 \cdots \int_0^1
 \prod_{i=1}^n
  x_i^{\alpha-1}
  (q x_i ; q)_{\beta-1}
 \prod_{1 \le i < j \le n}
  x_i^{2k} \left( q^{1-k} \frac{x_j}{x_i} ; q \right)_{2k}
 d_q x_1 \cdots d_q x_n
\\
=
q^{\alpha k \binom{n}{2} + 2 k^2 \binom{n}{3}}
\prod_{j=1}^n
  \frac{ \Gamma_q(\alpha+(j-1) k)
         \Gamma_q(\beta+(j-1) k)
         \Gamma_q(1+jk)
       }
       { \Gamma_q(\alpha+\beta+(n+j-2)k)
         \Gamma_q(1+k)
       },
\end{multline}
where $(x;q)_s$ is the $q$-Pochhammer symbol and $\Gamma_q(x)$ is the $q$-Gamma function.
The Askey conjecture (\ref{eq:Askey}) has been proved by Habsieger
\cite{Habsieger} and Kadell \cite{Kadell2} independently.

When $\alpha-1=r$, $\beta-1=s$, and $2\gamma=m$ are nonnegative
integers, the Selberg integral $S_n(\alpha,\beta,\gamma)$ has a
combinatorial interpretation due to Stanley \cite[Chapter 1, Exercise 11]{EC1}. 
Kim and Oh \cite{KimOh}
introduced a combinatorial object called ``$(n,\vectr,\vects)$-Young
books'', where $\vectr = (r_1, \dots, r_m)$ and $\vects = (s_1, \dots,
s_m)$ are (weak) compositions of $r$ and $s$ respectively, and showed
that
\begin{multline}
\label{eq:KO}
\frac{1}{n!}
\int_0^1 \cdots \int_0^1
 \prod_{k=1}^m
  \left(
   \prod_{i=1}^n x_i^{r_k} (1 - x_i)^{s_k}
   \prod_{1 \le i < j \le n} |x_j - x_i|
  \right)
 d x_1 \cdots d x_n
\\
=
\frac{1}{ N! }
\prod_{k=1}^m
 \frac{ F(n+r_k+s_k) }
      { F(r_k) F(s_k) }
 | \YB(n;\vectr,\vects) |,
\end{multline}                 
where $N = m \binom{n}{2} + (r+s+1)n + \sum_{k=1}^m r_k s_k$, $F(l) =
\prod_{i=1}^{l-1} i!$, and $\YB(n;\vectr,\vects)$ is the set of
$(n,\vectr,\vects)$-Young books.  Notice that the left hand side of
\eqref{eq:KO} is equal to $\frac{1}{n!}S_n(\alpha,\beta,\gamma)$ with
$\alpha-1=r$, $\beta-1=s$, and $2\gamma=m$.  Their proof of
\eqref{eq:KO} uses Stanley's combinatorial interpretation for the
Selberg integral and Postnikov's result \cite{Postnikov2009} on the generating function for
the number of standard Young tableaux of shifted staircase shape.

In this paper we generalize \eqref{eq:KO} by considering 
a $q$-analogue for the Young books.

A Young book can be thought of as a linear extension of a poset. 
Thus a natural $q$-analogue of $|\YB(n,\vectr,\vects)|$ is 
the generating function for $(n,\vectr,\vects)$-Young books 
according to the major index statistic of linear extensions. 
Interestingly, this $q$-analogue is the $q$-Selberg integral 
which is obtained from \eqref{eq:KO} by simply replacing 
$(1-x)^s$ by $(qx;q)_s = \prod_{i=1}^s (1 - q^i x)$ and 
the Riemann integral by the Jackson integral.

\begin{thm}
\label{thm:qKO}
Let $n$ and $m$ be positive integers, 
and $\vectr = (r_1, \dots, r_m)$ and $\vects = (s_1, \dots, s_m)$ compositions of
 $r$ and $s$ respectively.
Then, for $0<q<1$, we have
\begin{multline}
\label{eq:qKO}
\frac{1}{n!}
\int_0^1 \cdots \int_0^1
 \prod_{k=1}^m 
   \left(
     \prod_{i=1}^n x_i^{r_k} (q x_i ; q)_{s_k}
     \prod_{1 \le i < j \le n} |x_j - x_i|
   \right)
 d_q x_1 \cdots d_q x_n
\\
=
q^{ (r+1) \binom{n}{2} + m \binom{n}{3} }
\frac{1}{ [N]_q! }
\prod_{k=1}^m
 \frac{ F_q(n+r_k+s_k) }
      { F_q(r_k) F_q(s_k) }
\sum_{B \in \YB(n;\vectr,\vects)} q^{\maj(B)},
\end{multline}
where $N = m \binom{n}{2} + (r+s+1)n + \sum_{k=1}^m r_k s_k$, and
$F_q(l) = \prod_{i=1}^{l-1} [i]_q!$.
\end{thm}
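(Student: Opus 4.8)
The plan is to mirror the architecture of Kim and Oh's proof of \eqref{eq:KO}, lifting each ingredient to a $q$-weighted statement. Recall that their argument combines Stanley's combinatorial interpretation of the Selberg integral with nonnegative integer parameters with Postnikov's formula for the generating function counting standard Young tableaux of shifted staircase shape. On the $q$-side these roles will be played, respectively, by an evaluation of the Jackson integral as a $q$-weighted lattice-point sum and by the theory of $(P,\omega)$-partitions --- the framework signalled by the paragraph preceding the theorem, since a Young book is a linear extension of a poset $P=P(n;\vectr,\vects)$ and $\maj(B)$ is the major index of that linear extension with respect to a natural labeling. By the fundamental theorem of $(P,\omega)$-partitions, $\sum_{B\in\YB(n;\vectr,\vects)}q^{\maj(B)}=(1-q)^N[N]_q!\sum_{\sigma\in\A(P)}q^{|\sigma|}$, where $\A(P)$ denotes the set of $P$-partitions of $P$; substituting this into the right-hand side of \eqref{eq:qKO} reduces the theorem to showing that the left-hand side of \eqref{eq:qKO} equals $q^{(r+1)\binom n2+m\binom n3}(1-q)^N\prod_{k=1}^m\frac{F_q(n+r_k+s_k)}{F_q(r_k)F_q(s_k)}\sum_{\sigma\in\A(P)}q^{|\sigma|}$.

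\textbf{Step 1 (discretize the Jackson integral).} Applying $\int_0^1 f(x)\,d_qx=(1-q)\sum_{j\ge0}q^j f(q^j)$ in each variable turns the left-hand side into $(1-q)^n\sum_{\boldsymbol k\in\N^n}q^{k_1+\dots+k_n}$ times the integrand evaluated at $x_i=q^{k_i}$. Every term with a repeated coordinate is killed by $\prod_{i<j}|x_j-x_i|$, so by the $\Sym_n$-symmetry of the integrand we may cancel the $1/n!$ and sum over strict chains $k_1>k_2>\dots>k_n\ge0$; on such a chain $\prod_{i<j}|x_j-x_i|^m$ becomes $\prod_{i<j}(q^{k_j}-q^{k_i})^m$, a positive quantity. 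Setting $k_i=\lambda_i+(n-i)$ parametrizes these chains by partitions $\lambda$ with at most $n$ parts, and a short computation extracts the common monomial $q^{(r+1)\binom n2+m\binom n3}$ (the $\binom n3$ coming from $\sum_i(i-1)(n-i)$ in the $m$ Vandermonde factors, and the $(r+1)\binom n2$ from $\sum_i(n-i)$ in $\prod_i x_i^r$ together with the Jackson weight $q^{\sum k_i}$). What remains is a sum $R$ over $\lambda$ whose summand is a product of the factors $(1-q^{k_i-k_j})^m$ from the Vandermonde, a power of $q$ depending linearly on $\lambda$, and the $q$-Pochhammer symbols $(q^{k_i+1};q)_{s_k}$ coming from $\prod_i(qx_i;q)_{s_k}$.

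\textbf{Step 2 (identify the $P$-partition generating function).} The heart of the proof is to show that $R=(1-q)^{N-n}\prod_{k=1}^m\frac{F_q(n+r_k+s_k)}{F_q(r_k)F_q(s_k)}\sum_{\sigma\in\A(P)}q^{|\sigma|}$, which together with Step 1 gives the reduced identity above. I would do this by expanding each factor $1-q^{k_i-k_j}$ as $(1-q)[k_i-k_j]_q$ with $[k_i-k_j]_q$ a finite geometric sum, and each $(q^{k_i+1};q)_{s_k}$ via the $q$-binomial theorem / dual Cauchy identity, so that the partition $\lambda$ together with all the auxiliary summation indices assembles into a single array of $N$ nonnegative integers; recalling from \cite{KimOh} the structure of the Young-book poset and choosing its natural labeling appropriately, the defining inequalities of that array should be exactly the order relations of $P(n;\vectr,\vects)$, with total $q$-weight equal to $|\sigma|$, while the ``page'' contributions $F_q(n+r_k+s_k)/(F_q(r_k)F_q(s_k))$ are produced by a $q$-Chu--Vandermonde (or $q$-hook-length) evaluation of the subsums attached to the $r_k,s_k$ data. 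This is the $q$-analogue of Stanley's combinatorial interpretation fused with the Kim--Oh/Postnikov bijection between these tableau families and Young books. As a consistency check, letting $q\to1$ (so that $(1-q)^N\sum_{\sigma}q^{|\sigma|}\to|\YB(n;\vectr,\vects)|/N!$) recovers \eqref{eq:KO}.

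I expect the main obstacle to be Step 2, for two reasons. First, the expansion of $(q^{k_i+1};q)_{s_k}$ introduces signs, so the match with a positively weighted $P$-partition generating function cannot be termwise: it will require either a sign-reversing involution or a nontrivial basic hypergeometric summation (of $q$-Chu--Vandermonde or Sears type) to collapse the alternating auxiliary sums, and arranging this so that it meshes with the $m$-fold Vandermonde and all $m$ page factors simultaneously --- rather than one page at a time --- is delicate. Second, one must pin down the precise natural labeling of $P(n;\vectr,\vects)$ that makes the $q$-exponent coming out of the Jackson integral agree with $|\sigma|$ on the nose, and carry the bookkeeping power $q^{(r+1)\binom n2+m\binom n3}$ and the $q$-factorial normalization correctly through the whole reduction; the $q=1$ specialization against \cite{KimOh} is the natural sanity check to use throughout. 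Organizing Step 2 around an intermediate ``$q$-Selberg tableau'' model that mediates between the analytic side and Young books is, I expect, the cleanest way to make the bijection transparent; it is also the point at which the Schur-function expression for the integral mentioned in the abstract is likely to enter, via the dual Cauchy expansion of the $(qx_i;q)_{s_k}$ factors.
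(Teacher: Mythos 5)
Your overall architecture is right and matches the paper's: discretize the Jackson integral (your Step 1 is exactly the paper's Proposition~\ref{prop:general}, including the reduction to a sum over partitions via symmetry and the vanishing of the Vandermonde on diagonals), and convert $\sum_B q^{\maj(B)}$ into $(q;q)_N\sum_{\sigma\in\A(P)}q^{|\sigma|}$ via the $(P,\omega)$-partition theorem, which is the paper's \eqref{eq:maj}--\eqref{eq:AP}. But Step 2, which you correctly identify as the heart of the matter, is where the proof actually lives, and the route you sketch there --- expanding $(q^{k_i+1};q)_{s_k}$ and the Vandermonde factors into sign-laden multi-sums and hoping to collapse them onto the $P$-partition generating function by an involution or a $q$-Chu--Vandermonde summation --- is not what works, and you give no concrete reason to believe it can be pushed through. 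The missing idea is that no expansion is needed at all: with $k_i=\lambda_i+n-i$, the product $\prod_{i=1}^n(q^{k_i+1};q)_s\prod_{i<j}(q^{k_j}-q^{k_i})$ \emph{is already} $q^{\binom n3}\prod_{h=s}^{n+s-1}(q;q)_h\,s_\lambda(1,q,\dots,q^{n+s-1})$, because it is precisely the bialternant numerator for the padded partition $(\lambda_1,\dots,\lambda_n,0^s)$ in the alphabet $1,q,\dots,q^{n+s-1}$: the cross factors between the first $n$ and last $s$ variables of the Vandermonde produce exactly the Pochhammer symbols $(q^{k_i+1};q)_s$. This is the paper's Lemma~\ref{lem:q-Schur}, and it turns the discretized integral, with no signs and no involution, into $\sum_\lambda q^{(r+1)|\lambda|}\prod_k s_\lambda(1,q,\dots,q^{n+s_k-1})$ times explicit factors (Theorem~\ref{thm:qSelberg-Schur}).

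The second missing ingredient is the matching computation on the combinatorial side: you assert that the array of auxiliary indices ``should be exactly'' a $P$-partition, but the paper instead computes $\sum_{\sigma\in\A_\lambda(P_{n,r,s})}q^{|\sigma|}$ directly and in closed form as a constant times $s_\lambda(q^{r+1},\dots,q^{n+r+s})$ (Propositions~\ref{prop:P-par-1} and \ref{prop:P-par-m}), by slicing a $P$-partition along diagonals into an interlacing chain of partitions and evaluating the resulting word in the operators $H(t)$, $H^\perp(t)$, $D(q)$ via their commutation relations and the Cauchy identity. Once both sides are expressed as the same Schur-function sum over profiles $\lambda$, the theorem follows by comparing constants; this is also what makes the later evaluations via Cauchy and Schur--Littlewood possible. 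So while your skeleton and your consistency check at $q\to1$ are sound, the proposal as written has a genuine gap at its central step: it lacks both the bialternant identity that converts the integrand into Schur functions and the profile-refined $P$-partition computation, and the substitute strategy you propose would at best reprove these facts the hard way.
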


As a part of the proof of Theorem~\ref{thm:qKO}, 
we show that the $q$-Selberg integral in Theorem~\ref{thm:qKO} 
can be written in terms of the principal specialization of Schur functions.

\begin{thm}
\label{thm:qSelberg-Schur}
For $0<q<1$, we have
\begin{multline}
\label{eq:qSelberg-Schur}
\frac{1}{n!}
\int_0^1 \cdots \int_0^1
 \prod_{k=1}^m 
   \left(
     \prod_{i=1}^n x_i^{r_k} (q x_i ; q)_{s_k}
     \prod_{1 \le i < j \le n} |x_j - x_i|
   \right)
 d_q x_1 \cdots d_q x_n
\\
=
(1-q)^n
q^{ (r+1) \binom{n}{2} + m \binom{n}{3} }
\prod_{k=1}^m
 \prod_{h=s_k}^{n+s_k-1} (q;q)_h
\sum_{\lambda \in \Par_n}
 q^{|\lambda|}
 \prod_{k=1}^m
  q^{r_k |\lambda|}
  s_\lambda(1, q, \dots, q^{n+s_k-1}),
\end{multline}
where $\Par_n$ is the set of all partitions of length $\le n$.
In particular, we have
\begin{multline}
\label{eq:qSelberg}
\frac{1}{n!}
\int_0^1 \cdots \int_0^1
 \prod_{i=1}^n x_i^r (q x_i ; q)_s
 \prod_{1 \le i < j \le n} |x_j - x_i|^m
 d_q x_1 \cdots d_q x_n
\\
=
(1-q)^n
q^{ (r+1) \binom{n}{2} + m \binom{n}{3} }
\prod_{h=s}^{n+s-1} (q;q)_h
\left(
 \prod_{h=1}^{n-1} (q;q)_h
\right)^{m-1}
\\
\times
\sum_{\lambda \in \Par_n}
 q^{(r+1)|\lambda|}
 s_\lambda(1, q, \dots, q^{n+s-1})
 s_\lambda(1, q, \dots, q^{n-1})^{m-1}.
\end{multline}
\end{thm}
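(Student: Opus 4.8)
The plan is to establish \eqref{eq:qSelberg-Schur} directly by manipulating the Jackson integral on the left-hand side, and then obtain \eqref{eq:qSelberg} as the special case $r_k = 0$ for $k \geq 2$ (or rather $\vectr = (r,0,\dots,0)$) combined with a symmetrization, since when all the $r_k$ sum to $r$ and all $s_k$ equal $s$ the integrand collapses. Concretely, recall that the Jackson integral $\int_0^1 f(x)\, d_q x = (1-q)\sum_{j \geq 0} q^j f(q^j)$, so the $n$-fold Jackson integral over $[0,1]^n$ is a sum over $(j_1,\dots,j_n) \in \N^n$ of $q^{j_1+\dots+j_n}$ times the integrand evaluated at $x_i = q^{j_i}$, weighted by $(1-q)^n$. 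The first step is therefore to substitute $x_i = q^{j_i}$ and rewrite everything: $\prod_i x_i^{r_k} = q^{r_k(j_1+\dots+j_n)}$, the $q$-Pochhammer factors $(qx_i;q)_{s_k} = (q^{j_i+1};q)_{s_k}$, and the Vandermonde-type product $\prod_{i<j}|x_j - x_i| = \prod_{i<j}|q^{j_j} - q^{j_i}|$.

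The key combinatorial move is to break the sum over $\N^n$ according to the relative order of $j_1,\dots,j_n$. Because of the absolute value $\prod_{i<j}|x_j-x_i|$ and the $\frac{1}{n!}$, we may assume the $j_i$ are pairwise distinct (the diagonal contributes zero) and restrict to $j_1 > j_2 > \dots > j_n \geq 0$, i.e.\ write $j_i = \lambda_i + (n-i)$ with $\lambda_1 \geq \lambda_2 \geq \dots \geq \lambda_n \geq 0$, i.e.\ $\lambda \in \Par_n$. Under this substitution the Vandermonde $\prod_{i<j}(q^{j_j}-q^{j_i})$ factors, up to an explicit power of $q$ and signs, as the Weyl denominator $\prod_{i<j}(q^{\lambda_i+n-i} - q^{\lambda_j+n-j})$ evaluated at the geometric progression $1,q,\dots,q^{n-1}$. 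Then for each fixed $k$, the product $\prod_i q^{j_i} q^{r_k j_i} (q^{j_i+1};q)_{s_k}$ over the column reorganizes: the factor $(q^{j_i+1};q)_{s_k} = \frac{(q;q)_{j_i+s_k}}{(q;q)_{j_i}}$ can be absorbed, and together with the Vandermonde-type factor it should be recognized as a bialternant — that is, $\det(q^{(\lambda_j+n-j)(\text{something})})$ type expression — producing $s_\lambda(1,q,\dots,q^{n+s_k-1})$ via the classical formula $s_\lambda(1,q,\dots,q^{p-1}) = q^{n(\lambda)}\prod_{1\le i<j\le n}\frac{q^{\lambda_i - \lambda_j + j - i} - 1}{q^{j-i}-1}\cdot(\text{hook-content style factor})$, or more directly via the bialternant definition $s_\lambda(x_1,\dots,x_n) = \det(x_i^{\lambda_j+n-j})/\det(x_i^{n-j})$ applied with $x_i = q^{i-1}$. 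Collecting the $m$ copies (one per $k$) and all the stray powers of $q$ and the normalizing products $(q;q)_h$ gives exactly the right-hand side of \eqref{eq:qSelberg-Schur}; the exponent $(r+1)\binom{n}{2} + m\binom{n}{3}$ will emerge from carefully tallying $\binom{n}{2}$- and $\binom{n}{3}$-type sums such as $\sum_i (n-i) = \binom{n}{2}$ and $\sum_{i<j}(n-i) $ or the power of $q$ coming out of each factored Vandermonde.

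The main obstacle I expect is the bookkeeping of the powers of $q$ and the identification of each single-variable column product with a principal specialization of a Schur function: one has $m$ different specializations $s_\lambda(1,q,\dots,q^{n+s_k-1})$ with $m$ different numbers of variables, yet only one partition $\lambda$ and one Vandermonde in the $j_i$'s, so the argument must be set up so that the single Vandermonde $\prod_{i<j}(q^{j_j}-q^{j_i})$ is ``split'' into $\frac1m$-powers conceptually or, better, so that $m-1$ of the factors cancel against the $\prod_h (q;q)_h$ normalizers while one factor remains to build the final Schur function — in fact the cleanest route is to show each factor $\frac{1}{n!}\int \prod_i x_i^{r_k}(qx_i;q)_{s_k}\prod_{i<j}|x_j-x_i| \,d_qx$ (a single-sheet ``$q$-Young book'' integral) equals, up to the explicit prefactor, $\sum_\lambda q^{(r_k+1)|\lambda|} s_\lambda(1,q,\dots,q^{n+s_k-1})\prod_h(q;q)_h / \prod_h(q;q)_h$, and then glue the $m$ sheets by the Cauchy-type / branching structure of Young books so that the product over $k$ of per-sheet Schur specializations appears summed against a single $\lambda$. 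Once \eqref{eq:qSelberg-Schur} is in hand, \eqref{eq:qSelberg} follows immediately by taking $\vectr,\vects$ constant-sum with $s_k = s$ for all $k$: the product $\prod_{k=1}^m \prod_{h=s}^{n+s-1}(q;q)_h$ and $\prod_k q^{r_k|\lambda|} = q^{r|\lambda|}$ simplify, one copy of $\prod_h(q;q)_h$ and one Schur factor $s_\lambda(1,q,\dots,q^{n+s-1})$ are kept, and the remaining $m-1$ copies reduce to $s_\lambda(1,q,\dots,q^{n-1})^{m-1}$ times $(\prod_{h=1}^{n-1}(q;q)_h)^{m-1}$ by reabsorbing, which is precisely the stated formula.
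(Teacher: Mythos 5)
Your skeleton matches the paper's: expand the $n$-fold Jackson integral as a sum over $\mathbb{N}^n$, use the vanishing on the diagonal and the symmetry to restrict to $j_1>\dots>j_n\ge 0$ at the cost of a factor $n!$, substitute $j_i=\lambda_i+n-i$ with $\lambda\in\Par_n$, and recognize the evaluated integrand as principal specializations of Schur functions (this is exactly the paper's Proposition~\ref{prop:general} combined with Lemma~\ref{lem:q-Schur}). However, there is a genuine gap in how you handle the $m$ pages. You assert that there is ``only one Vandermonde in the $j_i$'s'' and conclude that it must be ``split into $\frac1m$-powers conceptually,'' or else that one should compute $m$ single-sheet integrals and ``glue'' them ``by the Cauchy-type / branching structure of Young books.'' Neither works: the first is not a mathematical operation, and the second conflates the integral of a product with a product of integrals. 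The point you are missing is that the integrand already contains $m$ separate copies of $\prod_{i<j}|x_j-x_i|$, one inside each factor indexed by $k$, i.e.\ the full power $\prod_{i<j}|x_j-x_i|^m$. After reducing to the single evaluation point $x_i=q^{\lambda_i+n-i}$, one applies to each factor
\[
f_{n,r_k,s_k}(x)=\prod_{i=1}^n x_i^{r_k}(qx_i;q)_{s_k}\prod_{1\le i<j\le n}|x_j-x_i|
\]
\emph{separately} the identity
\[
s_\lambda(1,q,\dots,q^{n+s-1})=\frac{q^{-\binom n3}}{\prod_{h=s}^{n+s-1}(q;q)_h}\,\prod_{i=1}^n\bigl(q^{\lambda_i+n-i+1};q\bigr)_s\prod_{1\le i<j\le n}\bigl(q^{\lambda_j+n-j}-q^{\lambda_i+n-i}\bigr),
\]
which pairs the $k$th copy of the Vandermonde with the $k$th Pochhammer product. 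This is precisely how $m$ different specializations $s_\lambda(1,\dots,q^{n+s_k-1})$ arise from a single $\lambda$, with no splitting or gluing needed; it is also the precise form of the ``bialternant in $n+s$ variables with $\lambda$ of length $\le n$'' step that you leave vague, and proving it (the paper's Lemma~\ref{lem:q-Schur}) is where the real $q$-power bookkeeping lives. Finally, your deduction of \eqref{eq:qSelberg} misstates the specialization: you need $\vects=(s,0,\dots,0)$ and $\vectr=(r,0,\dots,0)$ (one page carrying $s$, the remaining $m-1$ pages carrying $0$), not ``$s_k=s$ for all $k$''; with that correction the reduction you describe is the right one.
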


This allows us to evaluate the $q$-Selberg integral in Theorem~\ref{thm:qKO} 
when $m=2$ and $\vects = (s,0)$ (or $(0,s)$) (resp. $m=1$ and $s = 0$ or $1$),
by using the Cauchy identity (resp. the Schur--Littlewood identity) for 
Schur functions.

The remainder of this paper is organized as follows. In
Section~\ref{sec:-generating-function} we define
$(n,\vectr,\vects)$-Young books. Considering $(n,\vectr,\vects)$-Young
books as linear extensions of a certain poset, we express the
generating function for $(n,\vectr,\vects)$-Young books according to
the major index in terms of Schur functions. In
Section~\ref{sec:-schur-functions} we prove Theorems~\ref{thm:qKO} and
\ref{thm:qSelberg-Schur} by expressing the generating function
obtained in Section~\ref{sec:-generating-function} as a
Jackson integral. In Section~\ref{sec:-evaluation-q} we evaluate
special cases of the $q$-Selberg integral in Theorem~\ref{thm:qKO}
using Cauchy and Schur-Littlewood identities.  In
Section~\ref{sec:-variants-q} we derive variants of the $q$-Selberg
integral \eqref{eq:Askey} using Cauchy-type identities for classical
group characters.

\section{%
The generating function of Young books
}
\label{sec:-generating-function}

In this section we define the Young books introduced in \cite{KimOh}
and consider their generating function using the major index statistic
of linear extensions of a poset.

Throughout this paper we denote $[n]=\{1,2,\dots,n\}$. 

\def\outline#1{{\bf \Large #1}\\}

\begin{figure}
  \centering
  \includegraphics{./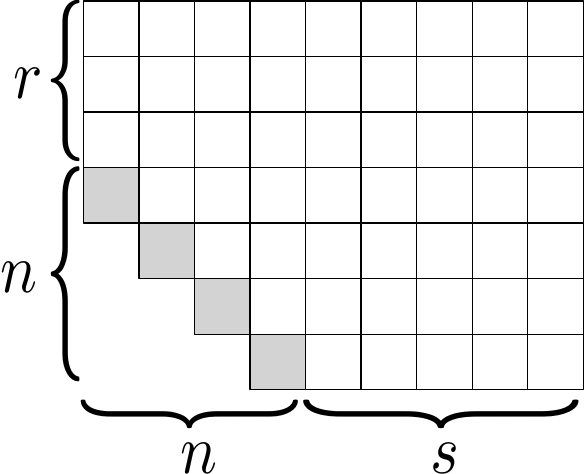}  
  \caption{An $(n,r,s)$-staircase with $n=4,r=3$ and $s=5$. The diagonal cells are shaded.}
  \label{fig:staircase}
\end{figure}
 
An \emph{$(n,r,s)$-staircase} is the diagram obtained from an
$(r+n)\times(n+s)$ rectangle by removing a staircase
$(n-1,n-2,\dots,1)$ from the lower-left corner, see
Figure~\ref{fig:staircase}. We label the rows of an
\emph{$(n,r,s)$-staircase} by $-r+1,-r+2,\dots,-1,0,1,2,\dots,n$ from
top to bottom and the columns by $1,2,\dots,n+s$ from left to
right. For $1\le i\le n$, the cell in the $i$th row and $i$th column
is called the \emph{$i$th diagonal cell}.

Let $\vectr=(r_1,r_2,\dots,r_m)$ and $\vects=(s_1,s_2,\dots,s_m)$ be
compositions of $r$ and $s$ respectively. In other words,
$r_1+r_2+\dots+r_m=r$, $s_1+s_2+\dots+s_m=s$ and $r_i,s_i\ge0$.

An \emph{$(n,\vectr,\vects)$-staircase} is an $m$-tuple
$(\lambda^{(1)}, \lambda^{(2)}, \dots, \lambda^{(m)})$ where each
$\lambda^{(k)}$ is an $(n,r_k,s_k)$-staircase and the $j$th diagonal
cells of $\lambda^{(1)}, \lambda^{(2)}, \dots, \lambda^{(m)}$ are
identified for $j=1,2,\dots,n$. We say that $\lambda^{(k)}$ is the
\emph{$k$th page} of this $(n,\vectr,\vects)$-staircase.  Note that an
\emph{$(n,\vectr,\vects)$-staircase} has $N$ cells, where
\[
N=m\binom n2+(r+s+1)n+\sum_{k=1}^m r_ks_k.
\]

An \emph{$(n,\vectr,\vects)$-Young book} is a filling of an
\emph{$(n,\vectr,\vects)$-staircase} with integers $1,2,\dots,N$ such
that in each page the entries are increasing from left to right in
each row and from top to bottom in each column.  See
Figure~\ref{fig:(n,r,s)-YB}. We denote by $\YB(n,\vectr,\vects)$ the
set of $(n,\vectr,\vects)$-Young books.

\begin{figure}
  \centering
  \includegraphics{./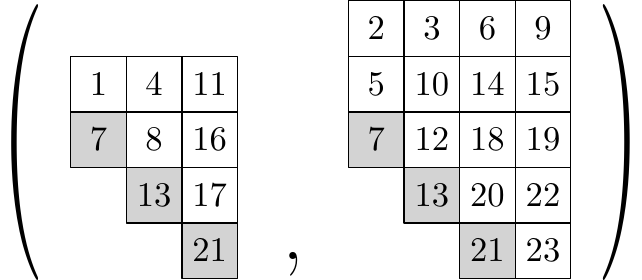}  
  \caption{An $(n,\vectr,\vects)$-Young book with $n=3,\vectr=(1,2)$,
    and $\vects=(0,1)$. The diagonal cells are shaded.}
  \label{fig:(n,r,s)-YB}
\end{figure}

 For $B\in \YB(n,\vectr,\vects)$, a
\emph{descent} of $B$ is an integer $i$ such that one of the following
conditions holds:
\begin{itemize}
\item the row containing $i+1$ has a smaller label than that of the
  row containing $i$ regardless of their page numbers,
\item $i$ and $i+1$ are non-diagonal entries and $i+1$ appears in an
  earlier page than $i$ with the same row label.
\end{itemize}
The \emph{major index} $\maj(B)$ of $B$ is
the sum of descents of $B$. For example, if $B$ is the Young book in
Figure~\ref{fig:(n,r,s)-YB}, then $\maj(B)=1+5+8+10+13+17+21=75$.

We can consider an $(n,\vectr,\vects)$-Young book as a linear
extension of a poset. We briefly introduce some terminologies. We
refer the reader to \cite[3.15]{EC1} for details.

Let $P$ by a poset with $p$ elements.  A \emph{linear extension} of
$P$ is an order-preserving bijection $\sigma:P \to [p]$, i.e., if $x
<_P y$ then $\sigma(x)<\sigma(y)$.  We denote by $\LL(P)$ the set of
linear extensions of $P$.  A \emph{$P$-partition} is an
order-reversing map $\sigma : P \to \N$, i.e., if $x <_P y$ then
$\sigma(x) \ge \sigma(y)$.  We denote by $\A(P)$ the set of all
$P$-partitions.

Note that, once we fix a linear extension $\omega$ of $P$, we can
consider $\pi\in \LL(P)$ as a permutation on $[p]$ by
$\omega\pi^{-1}$.  For a permutation $\pi = \pi_1 \pi_2 \dots
\pi_p$, a \emph{descent} of $\pi$ is an integer $i \in [p-1]$ such
that $\pi_i >\pi_{i+1}$. The \emph{major index} $\maj(\pi)$ is the sum
of descents of $\pi$.

It is well known, for example \cite[3.15.7 Theorem]{EC1}, that for a
poset $P$ with a fixed linear extension $\omega:P\to[p]$, we have
\begin{equation}
\label{eq:maj}
\sum_{\sigma \in \A(P)} q^{|\sigma|}
 =
\frac{ \sum_{\pi\in\LL(P) } q^{\maj(\omega\pi^{-1})} }
     { (q;q)_p },
\end{equation}
where $|\sigma| = \sum_{x \in P} \sigma(x)$.

Let $P_{n,\vectr,\vects}$ be the poset defined as follows:
\begin{itemize}
\item The underlying set consists of the cells in an
  $(n,\vectr,\vects)$-staircase.
\item The covering relation is defined by $c_1<c_2$ if $c_1$ and $c_2$
  are, respectively, in rows $i$ and $i'$ in columns $j$ and $j'$ in
  the same page such that $(i',j')=(i,j+1)$ or $(i',j')=(i+1,j)$. Here
  we assume that a diagonal cell is in every page. 
\end{itemize}

Observe that $\YB(n,\vectr,\vects)$ and $\LL(P_{n,\vectr,\vects})$ are
essentially the same: $B\in\YB(n,\vectr,\vects)$ corresponds to
$\pi\in \LL(P_{n,\vectr,\vects})$ such that for a cell $c$, the label
of $c$ in $B$ is $\pi(c)$.

Now we fix the linear extension $\omega:P_{n,\vectr,\vects}\to [N]$
defined uniquely by the following rules. Let $c$ and $c'$ be two cells
in rows $i$ and $i'$, columns $j$ and $j'$, and pages $k$ and $k'$,
respectively, where we define $k=0$ if $c$ is a diagonal cell. Then we
have $\omega(c)<\omega(c')$ if and only if one of the following
conditions holds: (1) $i<i'$ or (2) $i=i'$ and $k<k'$ or (3) $i=i'$,
$k=k'$, and $j<j'$, see Figure~\ref{fig:natural}.

\begin{figure}
  \centering
  \includegraphics{./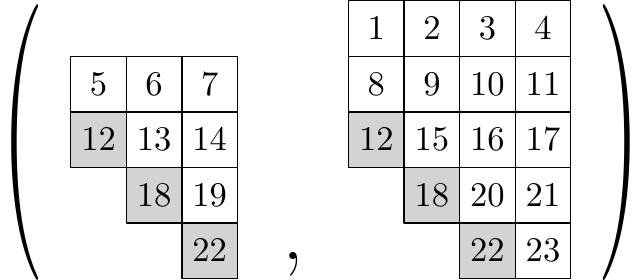}  
  \caption{The $(n,\vectr,\vects)$-Young book corresponding to the
    fixed linear extension $\omega$ of the poset $P_{n,\vectr,\vects}$
    with $n=3,\vectr=(1,2)$, and $\vects=(0,1)$.}
  \label{fig:natural}
\end{figure}


Note that if $B\in\YB(n,\vectr,\vects)$ corresponds to
$\pi\in\LL(P_{n,\vectr,\vects})$, the descents of $B$ and $\omega
\pi^{-1}$ are the same. Thus we have 
\begin{equation}
  \label{eq:YBL}
\sum_{B\in \YB(n,\vectr,\vects)} q^{\maj(B)}
 =
\sum_{\pi\in \LL(P_{n,\vectr,\vects})} q^{\maj(\omega \pi^{-1})}.
\end{equation}

By \eqref{eq:maj} and \eqref{eq:YBL}, we obtain
\begin{equation}
  \label{eq:AP}
\sum_{\sigma \in \A(P_{n,\vectr,\vects})} q^{|\sigma|}
=  \frac{\sum_{B\in \YB(n,\vectr,\vects)} q^{\maj(B)}}{(q;q)_N}.
\end{equation}

The {\em profile} of a $P_{n,\vectr,\vects}$-partition $\sigma \in \A(P_{n,\vectr,\vects})$ is defined to be 
the partition obtained by reading the diagonal entries of $\sigma$.
For a partition $\lambda$ of length $\le n$, 
let $\A_\lambda(P_{n,\vectr,\vects})$ be the set of all $P_{n,\vectr,\vects}$-partitions 
with profile $\lambda$.
Then we have
\[
\sum_{\sigma \in \A(P_{n,\vectr,\vects})} q^{|\sigma|}
 =
\sum_{\lambda \in \Par_n}
 \sum_{\sigma \in \A_\lambda(P_{n,\vectr,\vects})} q^{|\sigma|}.
\]

First we consider the case where $m=1$.
If a $P_{n+s,r,0}$-partition has the profile $(\lambda_1, \dots, \lambda_n, 0, \dots, 0)$, 
where $\lambda$ is a partition of length $\le n$,
then all the entries in the bottom triangle are $0$, 
and the remaining entries form a $P_{n,r,s}$-partition with profile $\lambda$.
So we have
\[
\sum_{\sigma \in \A_{(\lambda_1, \dots, \lambda_n)}(P_{n,r,s})} q^{|\sigma|}
 =
\sum_{\sigma \in \A_{(\lambda_1, \dots, \lambda_n, 0, \dots, 0)}(P_{n+s,r,0})} q^{|\sigma|}.
\]
In general, the generating function of $P_{l,r,0}$-partitions with fixed profile 
is expressed in terms of Schur function.
The following proposition is a special case of \cite[Theorem~2.1]{Okada}.
For the sake of completeness, we repeat the proof in our situation.

\begin{prop}
\label{prop:P-par-1}
For a partition $\mu$ of length $\le l$, we have
\begin{equation}
\label{eq:P-par-1}
\sum_{\sigma \in \A_\mu(P_{l,r,0})} q^{|\sigma|}
 =
\frac{ \prod_{h=1}^{r-1} (q;q)_h }
     { \prod_{h=l}^{l+r-1} (q;q)_h }
s_\mu(q^{r+1}, q^{r+2}, \dots, q^{r+l}).
\end{equation}
\end{prop}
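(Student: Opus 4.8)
The plan is to read a $P_{l,r,0}$-partition along the diagonals of its staircase. For $\sigma\in\A(P_{l,r,0})$ and $t=0,1,\dots ,l+r-1$, let $D_t$ be the sequence of entries of $\sigma$ in the cells $(i,i+t)$, listed in increasing order of $i$. Because a covering relation of $P_{l,r,0}$ always joins a cell of one diagonal to a cell of the next, $\sigma$ is recovered from the tuple $(D_0,D_1,\dots ,D_{l+r-1})$ and $|\sigma|=\sum_t|D_t|$. First I will verify the following dictionary: $D_0$ is the profile $\mu$; the diagonals $D_0,D_1,\dots ,D_r$ are partitions with $l$ parts (padded by zeros), while $D_{r+1},D_{r+2},\dots ,D_{r+l-1}$ have $l-1,l-2,\dots ,1$ parts; and the requirement that $\sigma$ be weakly decreasing along rows and columns is equivalent to the chain of interlacings
\[
\mu=D_0\preceq D_1\preceq\cdots\preceq D_r\succeq D_{r+1}\succeq\cdots\succeq D_{r+l-1},
\]
where $\alpha\preceq\beta$ means $\beta_1\ge\alpha_1\ge\beta_2\ge\alpha_2\ge\cdots$. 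The one delicate point is that the direction of interlacing reverses exactly at $D_r$: for $t\le r$ the relevant cells still lie in the rectangular part of the shape, and from $t=r$ on the shifted-staircase part takes over.

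Next I split the tuple at its peak $\rho:=D_r\in\Par_l$. The initial chain $\mu=D_0\subseteq D_1\subseteq\cdots\subseteq D_r=\rho$, in which each $D_t/D_{t-1}$ is a horizontal strip, is exactly a semistandard tableau of skew shape $\rho/\mu$ with entries in $[r]$; expressing the exponent $\sum_{t=0}^{r-1}|D_t|$ through the sizes of the strips and comparing with the combinatorial formula for a skew Schur function gives, for fixed $\rho$,
\[
\sum_{\mu=D_0\subseteq\cdots\subseteq D_r=\rho} q^{\sum_{t=0}^{r-1}|D_t|}=q^{r|\mu|}\,s_{\rho/\mu}(1,q,\dots ,q^{r-1}).
\]
The tail $\rho=D_r\succeq D_{r+1}\succeq\cdots\succeq D_{r+l-1}$ is a Gelfand--Tsetlin pattern with top row $\rho$, and the exponent $\sum_{t=r}^{r+l-1}|D_t|$ is precisely the one that turns the Gelfand--Tsetlin expansion of the Schur function into the specialization $s_\rho(q,q^2,\dots ,q^l)$. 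Multiplying these and summing over $\rho\in\Par_l$ yields
\[
\sum_{\sigma\in\A_\mu(P_{l,r,0})} q^{|\sigma|}=q^{r|\mu|}\sum_{\rho\in\Par_l} s_{\rho/\mu}(1,q,\dots ,q^{r-1})\,s_\rho(q,q^2,\dots ,q^l).
\]

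Finally I apply the skew Cauchy identity $\sum_{\rho} s_{\rho/\mu}(X)\,s_\rho(Y)=s_\mu(Y)\prod_{i,j}(1-X_iY_j)^{-1}$ with $X=(1,q,\dots ,q^{r-1})$ and $Y=(q,q^2,\dots ,q^l)$. The right-hand side becomes $q^{r|\mu|}s_\mu(q,\dots ,q^l)\prod_{i=1}^{r}\prod_{j=1}^{l}(1-q^{i+j-1})^{-1}$; homogeneity of $s_\mu$ turns $q^{r|\mu|}s_\mu(q,\dots ,q^l)$ into $s_\mu(q^{r+1},\dots ,q^{r+l})$, and summing the geometric series $\prod_{j=1}^l(1-q^{i+j-1})^{-1}=(q;q)_{i-1}/(q;q)_{i+l-1}$ in each factor gives $\prod_{i=1}^{r}\prod_{j=1}^{l}(1-q^{i+j-1})^{-1}=\prod_{h=1}^{r-1}(q;q)_h\big/\prod_{h=l}^{l+r-1}(q;q)_h$. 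This is \eqref{eq:P-par-1}.

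The only step requiring genuine care is the first one: checking that the diagonal reading is a bijection, locating the reversal of the interlacing direction at $D_r$, and matching the $q$-exponents with the standard (skew-)Schur specializations; once that dictionary is established the rest is a one-line appeal to the skew Cauchy identity.
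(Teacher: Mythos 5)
Your proof is correct and follows essentially the same route as the paper: both read $\sigma$ along the diagonals to obtain the interlacing chain $\mu=D_0\prec D_1\prec\cdots\prec D_r\succ D_{r+1}\succ\cdots\succ\emptyset$ (your placement of the peak at $D_r$ is the right one; the paper's displayed chain puts it at $\sigma^{(r-1)}$, an off-by-one slip that its subsequent operator computation $(D(q)H^\perp(1))^r(D(q)H(1))^l s_\emptyset$ does not share), and both then evaluate the resulting sum by a Cauchy-type identity. The only difference is presentational: where the paper encodes the two halves of the chain as a product of the operators $H$, $H^\perp$, $D(q)$ and invokes the commutation relation $H^\perp(s)H(t)=\frac{1}{1-st}H(t)H^\perp(s)$, you split at the peak $\rho=D_r$ and apply the skew Cauchy identity $\sum_\rho s_{\rho/\mu}(X)\,s_\rho(Y)=s_\mu(Y)\prod_{i,j}(1-X_iY_j)^{-1}$ with $X=(1,q,\dots,q^{r-1})$ and $Y=(q,\dots,q^l)$ --- which is exactly the same computation, and your bookkeeping of the $q$-exponents and of the prefactor $\prod_{h=1}^{r-1}(q;q)_h\big/\prod_{h=l}^{l+r-1}(q;q)_h$ checks out.
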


\begin{proof}
  For $\sigma \in \A_\mu(P_{l,r,0})$, let $\sigma^{(i)}$ be the
  partition obtained by reading the entries in the cells in row $k$
  and column $k+i$ for some $k$ from northwest to southeast.  Then we
  have
\[
\mu = \sigma^{(0)} \prec \sigma^{(1)} \prec \sigma^{(2)} \prec
 \dots \prec \sigma^{(r-1)} \succ \sigma^{(r)}\succ \sigma^{(r+1)} \succ \dots \succ \sigma^{(l+r)} = \emptyset,
\]
where $\alpha \succ \beta$ means that the skew diagram $\alpha/\beta$ is 
a horizontal strip, i.e.,
\[
\alpha_1 \ge \beta_1 \ge \alpha_2 \ge \beta_2 \ge \dots.
\]

Let $\Lambda$ be the ring of symmetric functions, 
and define linear operators $H(t)$, $H^\perp(t)$, and $D(q)$ on $\Lambda$ by
\[
H(t) s_\alpha
 = \sum_{\gamma \succ \alpha} t^{|\gamma|-|\alpha|} s_\gamma,
\quad
H^\perp(t) s_\alpha
 = \sum_{\beta \prec \alpha} t^{|\alpha|-|\beta|} s_\beta,
\quad
D(q) s_\alpha
 = q^{|\alpha|} s_\alpha.
\]
Note that $H(t)$ is the multiplication by $\sum_{k \ge 0} h_k t^k$.
If we put
\[
Z_\mu(q)
 =
\sum_{\sigma \in \A_\mu(P_{l,r,0})} q^{|\sigma|},
\]
then we have
\[
\sum_{\mu \in \Par_l} Z_\mu(q) s_\mu
 =
\left( D(q) H^\perp(1) \right)^r
\left( D(q) H(1) \right)^l
s_{\emptyset}.
\]
By using the relations
\[
D(q) D(q') = D(qq'),
\quad
D(q) H(t) = H(qt) D(q),
\quad\text{and}\quad
D(q) H^\perp(t) = H^\perp(q^{-1} t) D(q),
\]
we see that
\begin{multline*}
\left( D(q) H^\perp(1) \right)^r
\left( D(q) H(1) \right)^l
\\
=
H^\perp(q^{-1}) H^\perp(q^{-2}) \cdots H^\perp(q^{-r})
H(q^{r+1}) H(q^{r+2}) \cdots H(q^{r+l})
D(q^{r+l}).
\end{multline*}
Next, by applying the commuting relation \cite[I.5 Example~29]{Macdonald}
\[
H^\perp(s) H(t)
 =
\frac{1}{1 - st} H(t) H^\perp(s),
\]
we obtain
\begin{multline*}
H^\perp(q^{-1}) H^\perp(q^{-2}) \cdots H^\perp(q^{-r})
H(q^{r+1}) H(q^{r+2}) \cdots H(q^{r+l})
\\
=
\prod_{i=1}^r \prod_{j=1}^l \frac{1}{1 - q^{(-i)+(r+j)}}
H(q^{r+1}) H(q^{r+2}) \cdots H(q^{r+l})
H^\perp(q^{-1}) H^\perp(q^{-2}) \cdots H^\perp(q^{-r}).
\end{multline*}
Since $D(q) s_\emptyset = s_\emptyset$ and $H^\perp(t) s_\emptyset = s_\emptyset$, 
we have
\[
\sum_{\mu \in \Par_l} Z_\mu(q) s_\mu
 =
\prod_{i=1}^r \prod_{j=1}^l \frac{1}{1 - q^{(-i)+(r+j)}}
H(q^{r+1}) H(q^{r+2}) \cdots H(q^{r+l})
s_\emptyset.
\]
Lastly, by appealing to the Cauchy identity
\[
H(x_1) H(x_2) \cdots H(x_l) s_\emptyset
 =
\sum_{\mu \in \Par_l} s_\mu(x_1, \dots, x_l) s_\mu,
\]
we conclude that
\[
\sum_{\mu \in \Par_l} Z_\mu(q) s_\mu
 =
\frac{ \prod_{h=1}^{r-1} (q;q)_h }
     { \prod_{h=l}^{l+r-1} (q;q)_h }
\sum_{\mu \in \Par_l} s_\mu(q^{r+1}, \dots, q^{r+l}) s_\mu.
\]
Equating the coefficient of $s_\mu$ completes the proof.
\end{proof}

Now we can compute the generating function of $P_{n,\vectr,\vects}$-partitions.

\begin{prop}
\label{prop:P-par-m}
Let $\vectr =(r_1, \dots, r_m)$ and $\vects = (s_1, \dots, s_m)$ be 
compositions of $r$ and $s$ respectively.
Then we have
\begin{equation}
\label{eq:P-par-m}
\sum_{\sigma \in \A(P_{n,\vectr,\vects})} q^{|\sigma|}
 =
\prod_{k=1}^m
 \frac{ \prod_{h=1}^{r_k-1} (q;q)_h }
      { \prod_{h=n+s_k}^{n=r_k+s_k-1} (q;q)_h }
\sum_{\lambda \in \Par_n}
 q^{|\lambda|}
 \prod_{k=1}^m q^{r_k |\lambda|} s_\lambda(1, q, \dots, q^{n+s_k-1}).
\end{equation}
\end{prop}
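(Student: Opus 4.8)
The plan is to decouple the $m$ pages of $P_{n,\vectr,\vects}$, reduce each page to a single‑page staircase with $s_k=0$, and then invoke Proposition~\ref{prop:P-par-1}. First I would note that, by the definition of the covering relations, every covering relation of $P_{n,\vectr,\vects}$ lies within one page, the pages sharing only the $n$ diagonal cells. Hence, for a fixed profile $\lambda\in\Par_n$, a map $\sigma\colon P_{n,\vectr,\vects}\to\N$ whose diagonal reads off $\lambda$ is order‑reversing if and only if its restriction $\sigma^{(k)}$ to the $k$th page is order‑reversing for every $k$; this gives a bijection $\A_\lambda(P_{n,\vectr,\vects})\cong\prod_{k=1}^m\A_\lambda(P_{n,r_k,s_k})$. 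Since the diagonal cells carry total weight $|\lambda|$ and are counted once in $|\sigma|$ but once in each $|\sigma^{(k)}|$, we have $|\sigma|=\sum_{k=1}^m|\sigma^{(k)}|-(m-1)|\lambda|$, so that
\[
\sum_{\sigma\in\A_\lambda(P_{n,\vectr,\vects})}q^{|\sigma|}
 =q^{-(m-1)|\lambda|}\prod_{k=1}^m\,\sum_{\tau\in\A_\lambda(P_{n,r_k,s_k})}q^{|\tau|}.
\]

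Next, for a single page I would use the profile‑padding identity already recorded in the text to replace $\A_\lambda(P_{n,r_k,s_k})$ by $\A_{(\lambda_1,\dots,\lambda_n,0,\dots,0)}(P_{n+s_k,r_k,0})$, and then apply \eqref{eq:P-par-1} with $l=n+s_k$ and $r=r_k$. Discarding the zero parts of the profile and using homogeneity of Schur functions in the form $s_\lambda(q^{r_k+1},\dots,q^{r_k+n+s_k})=q^{(r_k+1)|\lambda|}\,s_\lambda(1,q,\dots,q^{n+s_k-1})$, this yields
\[
\sum_{\tau\in\A_\lambda(P_{n,r_k,s_k})}q^{|\tau|}
 =\frac{\prod_{h=1}^{r_k-1}(q;q)_h}{\prod_{h=n+s_k}^{n+r_k+s_k-1}(q;q)_h}\,q^{(r_k+1)|\lambda|}\,s_\lambda(1,q,\dots,q^{n+s_k-1}).
\]

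Finally I would substitute this into the displayed product, gather the prefactors, and simplify the power of $q$ via $-(m-1)|\lambda|+\sum_{k=1}^m(r_k+1)|\lambda|=(r+1)|\lambda|=|\lambda|+\sum_{k=1}^m r_k|\lambda|$, which reproduces the factor $q^{|\lambda|}\prod_k q^{r_k|\lambda|}$ of \eqref{eq:P-par-m}; summing over $\lambda\in\Par_n$ through $\sum_{\sigma\in\A(P_{n,\vectr,\vects})}q^{|\sigma|}=\sum_{\lambda\in\Par_n}\sum_{\sigma\in\A_\lambda(P_{n,\vectr,\vects})}q^{|\sigma|}$ then completes the proof. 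The one point needing genuine care is the page‑decoupling bijection of the first step: one must check that no covering relation of $P_{n,\vectr,\vects}$ joins non‑diagonal cells of two distinct pages, so that fixing the profile really does make the pages independent and the weight correction is exactly $(m-1)|\lambda|$; everything after that is bookkeeping.
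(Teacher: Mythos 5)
Your proposal is correct and follows essentially the same route as the paper: decompose a $P_{n,\vectr,\vects}$-partition with fixed profile $\lambda$ into its $m$ pages (with the weight correction $|\sigma|=\sum_k|\sigma_k|-(m-1)|\lambda|$), reduce each page to the $s=0$ case via the padding identity, and apply Proposition~\ref{prop:P-par-1} together with homogeneity of the Schur function. The bookkeeping of the powers of $q$ matches the paper's as well.
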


\begin{proof}
For a $P_{n,\vectr,\vects}$-partition $\sigma$ with profile $\lambda$, 
let $\sigma_k$ be the $P_{n,r_k,s_k}$-partition on the $k$th page.
Then we have
\[
|\sigma| = |\lambda| + \sum_{k=1}^m ( |\sigma_k| - |\lambda| ).
\]
Hence we have
\[
\sum_{\sigma \in \A(P_{n,\vectr,\vects})} q^{|\sigma|}
 =
\sum_{\lambda \in \Par_n}
 q^{|\lambda|}
 \prod_{k=1}^m 
  \sum_{\sigma_k \in \A_\lambda(P_{n,r_k,s_k})} q^{|\sigma_k| - |\lambda|}.
\]
On the other hand, 
it follows from Proposition~\ref{prop:P-par-1} and the homogeneity of Schur function that
\begin{align*}
\sum_{\sigma_k \in \A_\lambda(P_{n,r_k,s_k})} q^{|\sigma_k| - |\lambda|}
 &=
q^{-|\lambda|}
\frac{ \prod_{h=1}^{r_k-1} (q;q)_h }
     { \prod_{h=n+s_k}^{n+r_k+s_k-1} (q;q)_h }
s_\lambda(q^{r_k+1}, q^{r_k+2}, \dots, q^{n+r_k+s_k})
\\
 &=
q^{r_k |\lambda|}
\frac{ \prod_{h=1}^{r_k-1} (q;q)_h }
     { \prod_{h=n+s_k}^{n+r_k+s_k-1} (q;q)_h }
s_\lambda(1, q, \dots, q^{n+s_k-1}).
\end{align*}
\end{proof}

\section{%
Schur functions and $q$-Selberg integral
}
\label{sec:-schur-functions}

In this section, we prove Theorem~\ref{thm:qSelberg-Schur}, from which Theorem~\ref{thm:qKO} follows.

We begin with a general formula for the Jackson integral.

\begin{prop}
\label{prop:general}
Let $f$ be a function in $x_1, \dots, x_n$ satisfying the following two conditions:
\begin{enumerate}
\item[(a)]
$f$ is symmetric in $x_1, \dots, x_n$.
\item[(b)]
If $x_i = x_j$ for $i \neq j$, then $f(x_1, \dots, x_n) = 0$.
\end{enumerate}
Then we have
\begin{equation}
\label{eq:general}
\int_{[0,1]^n} f(x) d_qx
 = 
n! (1-q)^n
\sum_{\lambda \in \Par_n}
 q^{|\lambda|+\binom{n}{2}}
 f(q^{\lambda_1+n-1}, q^{\lambda_2+n-2}, \dots, q^{\lambda_n}).
\end{equation}
\end{prop}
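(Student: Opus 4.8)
The plan is to unwind the definition of the Jackson integral on $[0,1]^n$ as an iterated sum and then exploit the symmetry and vanishing hypotheses to collapse the sum onto a fundamental domain indexed by partitions. Recall that for a single variable $\int_0^1 g(x)\, d_qx = (1-q)\sum_{a\ge 0} q^a g(q^a)$, so by Fubini for absolutely convergent series,
\[
\int_{[0,1]^n} f(x)\, d_qx
 =
(1-q)^n \sum_{a_1,\dots,a_n \ge 0} q^{a_1+\dots+a_n} f(q^{a_1}, \dots, q^{a_n}).
\]
First I would note that condition (b) forces every term with $a_i = a_j$ for some $i\neq j$ to vanish, so only tuples $(a_1,\dots,a_n)$ with pairwise distinct entries contribute. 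The set of such tuples is acted on freely by $\Sym_n$ permuting coordinates, and by condition (a) the summand $q^{a_1+\dots+a_n} f(q^{a_1},\dots,q^{a_n})$ is constant on each orbit. Hence the sum equals $n!$ times the sum over the fundamental domain $a_1 > a_2 > \dots > a_n \ge 0$.

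Next I would set up the standard bijection between strictly decreasing nonnegative integer sequences of length $n$ and partitions of length $\le n$: given $a_1 > a_2 > \dots > a_n \ge 0$, define $\lambda_i = a_i - (n-i)$, so that $\lambda_1 \ge \lambda_2 \ge \dots \ge \lambda_n \ge 0$, i.e. $\lambda \in \Par_n$, and conversely $a_i = \lambda_i + n - i$. Under this substitution $a_1 + \dots + a_n = |\lambda| + \binom{n}{2}$, and $f(q^{a_1},\dots,q^{a_n}) = f(q^{\lambda_1+n-1}, q^{\lambda_2+n-2}, \dots, q^{\lambda_n})$. Substituting gives exactly the right-hand side of \eqref{eq:general}.

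The only genuine subtlety — and the step I would treat most carefully — is the interchange of the order of summation (Fubini) and the rearrangement of the $n$-fold series into orbit sums, which require absolute convergence. Since $0<q<1$, the factor $q^{a_1+\dots+a_n}$ decays geometrically, and on $[0,1]^n$ the function $f$ (a polynomial, in all our applications) is bounded, so $\sum |q^{a_1+\dots+a_n} f(q^{a_1},\dots,q^{a_n})| < \infty$; this justifies both manipulations. I would state this convergence remark explicitly and then the identity follows by the substitution above. Everything else is a routine reindexing.
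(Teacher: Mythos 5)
Your proposal is correct and follows essentially the same route as the paper's proof: expand the Jackson integral as an $n$-fold sum, discard tuples with repeated exponents via condition (b), use condition (a) to reduce to the strictly decreasing fundamental domain (producing the $n!$), and reindex by $k_i = \lambda_i + n - i$. The only difference is that you explicitly flag the absolute-convergence justification for the rearrangement, which the paper leaves implicit.
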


\begin{proof}
We put
\[
S = \int_{[0,1]^n} f(x) d_qx.
\]
By definition, we have
\[
S = (1-q)^n \sum_{k_1, \dots, k_n \ge 0} f(q^{k_1}, \dots, q^{k_n}) q^{k_1 + \dots + k_n}.
\]
Condition (b) implies that $f(q^{k_1}, \dots, q^{k_n}) = 0$ if $k_i = k_j$ for some $i \neq j$.
Hence we may assume that $k_1, \dots, k_n$ are distinct in the summation.
For a permutation $\sigma \in \Sym_n$, we define
\[
S_\sigma
=
(1-q)^n \sum_{k_{\sigma(1)} > \dots > k_{\sigma(n)} \ge 0} f(q^{k_1}, \dots, q^{k_n}) q^{k_1 + \dots + k_n}.
\]
Then we have
\[
S = \sum_{\sigma \in \Sym_n} S_\sigma,
\]
and Condition (a) implies that
\[
S_\sigma = S_e \quad(\sigma \in \Sym_n),
\]
where $e$ is the identity permutation.
Therefore we have
\[
S
 = n! S_e
 = n! (1-q)^n \sum_{k_1 > \dots > k_n \ge 0} f(q^{k_1}, \dots, q^{k_n}) q^{k_1 + \dots + k_n}.
\]
For $k_1 > \dots > k_n \ge 0$, let $\lambda=(\lambda_1,\dots,\lambda_n)$ be
the partition given by $k_i=\lambda_i+(n-i)$. Then we have
\[
S
 =
n! (1-q)^n \sum_{\lambda \in \Par_n}
 q^{|\lambda| + \binom{n}{2}} f(q^{\lambda_1+n-1}, \dots, q^{\lambda_n}).
\]
\end{proof}

The principal specialization of Schur function can be written in the following form.

\begin{lem}
\label{lem:q-Schur}
Let $n$ be a positive integer and $\lambda$ a partition of length $\le n$.
If $s$ is a nonnegative integer, then we have
\begin{equation}
\label{eq:q-Schur}
s_\lambda(1, q, \dots, q^{n+s-1})
 =
\frac{q^{ - \binom{n}{3} }}{\prod_{h=s}^{n+s-1} (q;q)_h}
\prod_{i=1}^n ( q \cdot q^{\lambda_i+n-i} ; q )_s
\prod_{1 \le i < j \le n} (q^{\lambda_j+n-j} - q^{\lambda_i+n-i}).
\end{equation}
\end{lem}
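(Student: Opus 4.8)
The plan is to derive \eqref{eq:q-Schur} directly from the bialternant (Weyl) formula for the Schur function. Since $\lambda$ has length $\le n\le n+s$, I would pad $\lambda$ with zeros to length $n+s$ and set $\mu_j=\lambda_j+(n+s)-j$ for $1\le j\le n+s$; because $\lambda$ is a partition we have $\mu_1>\mu_2>\dots>\mu_{n+s}=0$, and
\[
s_\lambda(1,q,\dots,q^{n+s-1})
 =\frac{\det_{1\le i,j\le n+s}\bigl((q^{i-1})^{\mu_j}\bigr)}
        {\det_{1\le i,j\le n+s}\bigl((q^{i-1})^{n+s-j}\bigr)}.
\]
The point is that both determinants are Vandermonde determinants in disguise: the $(i,j)$ entry of the numerator is $y_j^{\,i-1}$ with $y_j=q^{\mu_j}$ (and similarly $y_j=q^{n+s-j}$ for the denominator), and the $y_j$ are pairwise distinct, so each determinant equals a product of differences. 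Hence
\[
s_\lambda(1,q,\dots,q^{n+s-1})
 =\prod_{1\le i<j\le n+s}\frac{q^{\mu_j}-q^{\mu_i}}{q^{n+s-j}-q^{n+s-i}}.
\]

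Next I would split the index pairs $(i,j)$, $1\le i<j\le n+s$, into three blocks, abbreviating $\ell_i=\lambda_i+n-i$. When $i,j>n$ we have $\mu_i=n+s-i$ and $\mu_j=n+s-j$, so the numerator and denominator factors coincide and this block contributes $1$. When $i\le n<j$, fixing $i$ and letting $j$ range over $n+1,\dots,n+s$ the products telescope: the numerator is $\prod_{t=0}^{s-1}(q^t-q^{\ell_i+s})=q^{\binom s2}(q\cdot q^{\lambda_i+n-i};q)_s$ and the denominator is $\prod_{t=0}^{s-1}(q^t-q^{n+s-i})=q^{\binom s2}(q;q)_{n+s-i}/(q;q)_{n-i}$, so after multiplying over $i=1,\dots,n$ this block contributes
\[
\frac{\prod_{h=0}^{n-1}(q;q)_h}{\prod_{h=s}^{n+s-1}(q;q)_h}
 \prod_{i=1}^n(q\cdot q^{\lambda_i+n-i};q)_s .
\]
When $i,j\le n$, a common factor $q^s$ cancels from numerator and denominator, leaving the ``$s=0$'' ratio $\prod_{1\le i<j\le n}(q^{\ell_j}-q^{\ell_i})/\prod_{1\le i<j\le n}(q^{n-j}-q^{n-i})$, and the denominator here is the pure Vandermonde product $\prod_{0\le a<b\le n-1}(q^a-q^b)=q^{\binom n3}\prod_{h=0}^{n-1}(q;q)_h$, using $\sum_{0\le a<b\le n-1}a=\binom n3$.

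Multiplying the three blocks, the two copies of $\prod_{h=0}^{n-1}(q;q)_h$ cancel and one is left with exactly the right-hand side of \eqref{eq:q-Schur}, the prefactor $q^{-\binom n3}$ arising from the third block. The whole argument is elementary, and the only step that really requires care is the bookkeeping of the powers of $q$ together with the telescoping of the $q$-Pochhammer products in the two nontrivial blocks. Alternatively, one could combine the classical principal specialization $s_\lambda(1,q,\dots,q^{m-1})=q^{n(\lambda)}\prod_{u\in\lambda}(1-q^{m+c(u)})/(1-q^{h(u)})$ with the first-column-hook-length form of the $q$-hook-length formula, but the bialternant computation above is self-contained.
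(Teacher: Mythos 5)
Your proof is correct and follows essentially the same route as the paper: both start from the bialternant/Vandermonde specialization of $s_\lambda(1,q,\dots,q^{n+s-1})$ and split the index pairs $1\le i<j\le n+s$ into the three blocks $i,j\le n$, $i\le n<j$, and $n<i,j$. The only cosmetic difference is that you cancel numerator against denominator block by block, whereas the paper evaluates the pieces separately and finishes with the exponent identity $s\binom n2+n\binom s2+\binom s3-\binom{n+s}3=-\binom n3$.
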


\begin{proof}
Using the Vandermonde determinant, we have
\[
s_\lambda(1,q,\dots,q^{n+s-1})
 =
\frac{ \prod_{1 \le i < j \le n+s} \left( q^{\lambda_j+n+s-j} - q^{\lambda_i+n+s-i} \right) }
     { \prod_{1 \le i < j \le n+s} \left( q^{n+s-j} - q^{n+s-i} \right) }.
\]
Since $\lambda_{n+1} = \dots = \lambda_{n+s} = 0$, we have
\begin{align*}
\prod_{1 \le i < j \le n} ( q^{\lambda_j+n+s-j} - q^{\lambda_i+n+s-i} )
&=
q^{s \binom{n}{2}}
\prod_{1 \le i < j \le n} \left( q^{\lambda_j+n-j} - q^{\lambda_i+n-i} \right),
\\
\prod_{i=1}^n \prod_{j=n+1}^{n+s} ( q^{\lambda_j+n+s-j} - q^{\lambda_i+n+s-i} )
&=
q^{n \binom{s}{2}}
\prod_{i=1}^n \left( q^{\lambda_i+n-i+1} ; q \right)_s,
\\
\prod_{n+1 \le i < j \le n+s} ( q^{\lambda_j+n+s-j} - q^{\lambda_i+n+s-i} )
&=
q^{\binom{s}{3}}
\prod_{h=1}^{s-1} (q;q)_h.
\end{align*}
On the other hand, the denominator is equal to
\[
\prod_{1 \le i < j \le n+s} ( q^{n+s-j} - q^{n+s-i} )
 =
q^{\binom{n+s}{3}}
\prod_{h=1}^{n+s-1} (q;q)_h.
\]
We can complete the proof by noting the relation
\[
s \binom{n}{2} + n \binom{s}{2} + \binom{s}{3} - \binom{n+s}{3}
 =
- \binom{n}{3}.
\]
\end{proof}

Now we can prove Theorem~\ref{thm:qSelberg-Schur}.

\begin{proof}[Proof of Theorem~\ref{thm:qSelberg-Schur}]
We put
\[
f_{n,r,s}(x_1, \dots, x_n)
 =
\prod_{i=1}^n x_i^r (q x_i ; q)_s
\prod_{1 \le i < j \le n} |x_j - x_i|.
\]
Then, by using Proposition~\ref{prop:general}, we have
\[
\frac{1}{n!}
\int_{[0,1]^n} \prod_{k=1}^m f_{n,r_k,s_k}(x_1, \dots, x_n) d_q x
 =
(1-q)^n
\sum_{\lambda\in\Par_n}
 q^{|\lambda| + \binom{n}{2}}
 \prod_{k=1}^m f(q^{\lambda_1+n-1}, \dots, q^{\lambda_n}).
\]
It follows from Lemma~\ref{lem:q-Schur} that
\begin{align*}
f_{n,r,s}(q^{\lambda_1+n-1}, \dots, q^{\lambda_n})
 &=
\prod_{i=1}^n
 \left( q^{\lambda_i+n-i} \right)^r
 (q \cdot q^{\lambda_i + n-i} ; q)_s
\prod_{1 \le i < j \le n}
 |q^{\lambda_j+n-j} - q^{\lambda_i+n-i}|
\\
 &=
q^{ r|\lambda| + r \binom{n}{2} }
\prod_{i=1}^n
 (q \cdot q^{\lambda_i + n-i} ; q)_s
\prod_{1 \le i < j \le n}
 (q^{\lambda_j+n-j} - q^{\lambda_i+n-i})
\\
 &=
q^{ r|\lambda| + r \binom{n}{2} + \binom{n}{3} }
\prod_{h=s}^{n+s-1} (q;q)_h 
s_\lambda(1, q, \dots, q^{n+s-1}).
\end{align*}
Hence we have
\begin{align*}
&
\frac{1}{n!}
\int_{[0,1]^n} \prod_{k=1}^m f_{n,r_k,s_k}(x_1, \dots, x_n) d_q x
\\
&=
(1-q)^n
\sum_{\lambda\in\Par_n}
 q^{|\lambda| + \binom{n}{2}}
\prod_{k=1}^m
 q^{ r_k|\lambda| + r_k \binom{n}{2} + \binom{n}{3} }
\prod_{h=s_k}^{n+s_k-1} (q;q)_h 
 s_\lambda(1, q, \dots, q^{n+s_k-1})
\\
&=
q^{ (r+1) \binom{n}{2} + m \binom{n}{3} }
(1-q)^n
\prod_{k=1}^m
\prod_{h=s_k}^{n+s_k-1} (q;q)_h 
\sum_{\lambda\in\Par_n}
 q^{|\lambda|}
 \prod_{k=1}^m
  q^{r_k |\lambda|}
  s_\lambda(1, q, \dots, q^{n+s_k-1}).
\end{align*}
\end{proof}

By Proposition~\ref{prop:P-par-m}, Theorem~\ref{thm:qSelberg-Schur}
and \eqref{eq:maj}, we obtain Theorem~\ref{thm:qKO}.


\section{%
Evaluation of $q$-Selberg integrals
}
\label{sec:-evaluation-q}

In this section, we use the Cauchy and Schur--Littlewood identities 
to evaluate some special cases of the $q$-Selberg integral in Theorem~\ref{thm:qKO}.

Recall the Cauchy and Schur--Littlewood identities:
\begin{gather}
\label{eq:Cauchy}
\sum_{\lambda \in \Par} s_{\lambda}(x) s_{\lambda}(y)
 = 
\frac{1}{\prod_{i,j} (1-x_i y_j)},
\\
\label{eq:Littlewood}
\sum_{\lambda \in \Par} s_{\lambda}(x) = 
\frac{1}{\prod_{i} (1-x_i) \prod_{i<j} (1-x_ix_j)},
\end{gather}
where $\Par$ denotes the set of all partitions.

\begin{thm}
\label{thm:eval-qSelberg}
For $0 < q < 1$, we have
\begin{multline}
\label{eq:eval-qSelberg1}
\int_{0}^1 \cdots \int_{0}^1
 \prod_{i=1}^n x_i^r 
 \prod_{1 \le i< j \le n} \left| x_j - x_i \right|
 d_q x_1 \cdots d_q x_n
\\
=
n! q^{ (r+1) \binom{n}{2} + \binom{n}{3} }
\frac{ [1]_q! [2]_q! \cdots [n-1]_q! }
     { \prod_{i=1}^n [r+i]_q 
       \prod_{1 \le i < j \le n} [2 r + i + j]_q },
\end{multline}
\begin{multline}
\label{eq:eval-qSelberg2}
\int_{0}^1 \cdots \int_{0}^1
 \prod_{i=1}^n x_i^r (1 - q x_i)
 \prod_{1 \le i< j \le n} \left| x_j - x_i \right|
 d_q x_1 \cdots d_q x_n
\\
=
n! q^{ (r+1) \binom{n}{2} + \binom{n}{3} }
\frac{ [1]_q! [2]_q! \cdots [n]_q! \left[ (n+1)r + \binom{n+1}{2} \right]_q }
     { \prod_{i=1}^{n+1} [r+i]_q 
       \prod_{1 \le i < j \le n+1} [2 r + i + j]_q },
\end{multline}
\begin{multline}
\label{eq:eval-qSelberg3}
\int_{0}^1 \cdots \int_{0}^1
\prod_{i=1}^n x_i^r (qx_i ; q)_s
\prod_{1 \le i < j \le n} \left| x_j - x_i \right|^2
d_q x_1 \cdots d_q x_n
\\
=
n! q^{ (r+1) \binom{n}{2} + 2 \binom{n}{3} }
\frac{ [1]_q! [2]_q! \cdots [n-1]_q! [s+1]_q! [s+2]_q! \cdots [s+n-1]_q! }
     { \prod_{i=1}^{n+s} \prod_{j=1}^n [r + i + j -1]_q}.
\end{multline}
\end{thm}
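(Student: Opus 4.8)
The strategy is to specialize the Schur-function formula \eqref{eq:qSelberg} from Theorem~\ref{thm:qSelberg-Schur} in three cases and then collapse the resulting sum over $\Par_n$ using the Cauchy identity \eqref{eq:Cauchy} or the Schur--Littlewood identity \eqref{eq:Littlewood}. For \eqref{eq:eval-qSelberg1} I would take $m=1$, $s=0$ in \eqref{eq:qSelberg}; then $s_\lambda(1,q,\dots,q^{n-1})$ vanishes unless $\ell(\lambda)\le n$ (automatic) and, more importantly, the factor $s_\lambda(1,q,\dots,q^{s-1})$ with $s=0$ forces the sum to a single term — actually the cleaner route is to note that with $s=0$ the variable list $1,q,\dots,q^{n+s-1}$ has $n$ entries, and the sum $\sum_\lambda q^{(r+1)|\lambda|}s_\lambda(1,q,\dots,q^{n-1})^2$ is not obviously summable. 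Instead I would go back one step and observe that for $m=1$, $s=0$ the integrand in \eqref{eq:qSelberg} only has a single Vandermonde factor, so the right object is $\sum_\lambda q^{(r+1)|\lambda|}s_\lambda(1,q,\dots,q^{n-1})$, and this is a principal specialization of \eqref{eq:Littlewood}: set $x_i = q^{(r+1)\cdot?}\cdots$. Let me instead phrase it as: plug $s=0$, $m=1$ into \eqref{eq:qSelberg-Schur} directly (the $\vectr,\vects$ form with $m=1$), giving $\sum_\lambda q^{|\lambda|}q^{r|\lambda|}s_\lambda(1,q,\dots,q^{n-1}) = \sum_\lambda s_\lambda(q^{r+1},\dots,q^{r+n})$, which by \eqref{eq:Littlewood} equals $\prod_{i=1}^n (1-q^{r+i})^{-1}\prod_{i<j}(1-q^{2r+i+j})^{-1}$. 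Converting $1-q^a = (1-q)[a]_q$ and tidying the prefactor $q^{(r+1)\binom n2+\binom n3}(1-q)^n\prod_{h=0}^{n-1}(q;q)_h$, together with the $n!$ from reinstating the unsymmetrized integral, yields \eqref{eq:eval-qSelberg1}.

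For \eqref{eq:eval-qSelberg2} the same specialization with $s=1$, $m=1$ is used: then $(qx_i;q)_1 = 1-qx_i$, and \eqref{eq:qSelberg-Schur} with $m=1$ gives $\sum_\lambda q^{(r+1)|\lambda|}s_\lambda(1,q,\dots,q^{n})$, a Schur--Littlewood sum in $n+1$ variables $q^{r+1},\dots,q^{r+n+1}$ scaled appropriately. Caution: the exponent bookkeeping must track that $s_\lambda(1,q,\dots,q^{n})$ has $n+1$ arguments while $\lambda$ runs over $\Par_n$, not $\Par_{n+1}$; the extra partitions with $\ell(\lambda)=n+1$ contribute $0$ to $s_\lambda(q^{r+1},\dots,q^{r+n+1})$ only if one argument is $\ldots$ — in fact $s_\lambda$ in $n+1$ variables does \emph{not} vanish for $\ell(\lambda)=n+1$, so I must argue that those terms are genuinely absent from the left side of \eqref{eq:qSelberg-Schur}, i.e. the profile has length $\le n$ by construction of $P_{n,r,s}$. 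Granting that, \eqref{eq:Littlewood} in $n+1$ variables produces $\prod_{i=1}^{n+1}(1-q^{r+i})^{-1}\prod_{1\le i<j\le n+1}(1-q^{2r+i+j})^{-1}$, and after extracting $(1-q)$'s and simplifying the $q$-prefactor one gets a clean ratio of $q$-factorials except for one leftover linear factor, which should organize into $[(n+1)r+\binom{n+1}2]_q$; pinning down that this is exactly the surviving factor (rather than a more complicated polynomial) is the one genuinely delicate simplification here.

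For \eqref{eq:eval-qSelberg3} I take $m=2$ with $\vects=(s,0)$ (or $(0,s)$) and $\vectr=(r,0)$ in \eqref{eq:qSelberg-Schur}; then the sum becomes $\sum_\lambda q^{|\lambda|}\,q^{r|\lambda|}s_\lambda(1,q,\dots,q^{n+s-1})\cdot q^{0}\,s_\lambda(1,q,\dots,q^{n-1})$, i.e. $\sum_\lambda s_\lambda(q^{r+1},\dots,q^{r+n+s})\,s_\lambda(1,q,\dots,q^{n-1})$ after homogenizing (and inserting the lone $q^{|\lambda|}$ into one of the two specialization vectors). This is precisely the shape of the Cauchy identity \eqref{eq:Cauchy} with $x_i = q^{r+i}$ ($1\le i\le n+s$, shifted by the extra $q$) and $y_j = q^{j-1}$ ($1\le j\le n$), so the sum evaluates to $\prod_{i=1}^{n+s}\prod_{j=1}^{n}(1-q^{r+i+j-1})^{-1}$ up to the exact exponent shift I need to fix. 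The remaining work is purely bookkeeping: combine the prefactor $q^{(r+1)\binom n2+2\binom n3}(1-q)^n\prod_{h=s}^{n+s-1}(q;q)_h\prod_{h=1}^{n-1}(q;q)_h$ with the $(1-q)$'s pulled out of $\prod(1-q^{r+i+j-1})^{-1}=\prod\big((1-q)[r+i+j-1]_q\big)^{-1}$, confirm the powers of $(1-q)$ cancel, and recognize $\prod_{h=s}^{n+s-1}(q;q)_h = [s+1]_q!\cdots[s+n-1]_q!\cdot(1-q)^{?}$-type rearrangement to land on $[1]_q!\cdots[n-1]_q![s+1]_q!\cdots[s+n-1]_q!$ in the numerator. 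The main obstacle throughout is not the identities themselves but making every power of $q$ and every power of $(1-q)$ balance — especially in \eqref{eq:eval-qSelberg2}, isolating the single extra factor $[(n+1)r+\binom{n+1}2]_q$ from the Schur--Littlewood product in $n+1$ variables; I would do this by writing the $n+1$-variable product as the $n$-variable one times the ``new row/column'' factors and checking the telescoping against the prefactor, then reading off what survives.
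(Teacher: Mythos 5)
Your overall strategy is the same as the paper's: specialize \eqref{eq:qSelberg} to $(m,s)=(1,0)$, $(1,1)$ and $m=2$, then collapse the Schur sum via the Schur--Littlewood identity \eqref{eq:Littlewood} for the first two cases and the Cauchy identity \eqref{eq:Cauchy} for the third. Parts \eqref{eq:eval-qSelberg1} and \eqref{eq:eval-qSelberg3} are correct as sketched, and your choice to absorb $q^{(r+1)|\lambda|}$ into the $(n+s)$-variable specialization rather than the $n$-variable one is immaterial. One bookkeeping caution in \eqref{eq:eval-qSelberg3}: $\prod_{h=s}^{n+s-1}(q;q)_h$ rearranges to $[s]_q![s+1]_q!\cdots[s+n-1]_q!$ times a power of $1-q$, not to $[s+1]_q!\cdots[s+n-1]_q!$ as you wrote; carrying the computation through honestly (and comparing with the $q$-beta integral at $n=1$, $s=2$) you will find the extra $[s]_q!$ survives, so do not force your answer to match the displayed right-hand side.

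The genuine gap is in \eqref{eq:eval-qSelberg2}, and you have located it without closing it. You correctly note that the sum runs over $\Par_n$ while the Schur functions have $n+1$ arguments, but you then invoke ``\eqref{eq:Littlewood} in $n+1$ variables,'' which evaluates $\sum_{\lambda\in\Par_{n+1}} s_\lambda(x_1,\dots,x_{n+1})$ rather than the length-restricted sum that actually appears; hoping the discrepancy ``telescopes against the prefactor'' into one linear factor is not an argument. The missing idea is short: since
$s_{\lambda+(1^{n+1})}(x_1,\dots,x_{n+1}) = x_1\cdots x_{n+1}\, s_\lambda(x_1,\dots,x_{n+1})$,
the terms with $\lambda_{n+1}>0$ contribute $x_1\cdots x_{n+1}$ times the full Littlewood sum, whence
\[
\sum_{\lambda\in\Par_n} s_\lambda(x_1,\dots,x_{n+1})
= \frac{1-x_1\cdots x_{n+1}}{\prod_{i=1}^{n+1}(1-x_i)\prod_{1\le i<j\le n+1}(1-x_ix_j)}.
\]
Specializing $x_i=q^{r+i}$ makes the numerator $1-q^{(n+1)r+\sum_{i=1}^{n+1}i}$, which after dividing out $1-q$ is the extra $q$-integer in \eqref{eq:eval-qSelberg2}; note that $\sum_{i=1}^{n+1}i=\binom{n+2}{2}$, so check the binomial coefficient in the printed statement against this computation rather than taking it on faith.
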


\begin{remark}
The last formula \eqref{eq:eval-qSelberg3} is equivalent to the $k=1$ case of \eqref{eq:Askey}.
See \cite[Section~4]{Kadell1}.
\end{remark}

\begin{proof}
By applying \eqref{eq:qSelberg} with $s=0$ and $m=1$ and 
using the homogeneity of Schur function, we have
\begin{multline*}
\int_{0}^1 \cdots \int_{0}^1
 \prod_{i=1}^n x_i^r 
 \prod_{1 \le i< j \le n} \left| x_j - x_i \right|
 d_q x_1 \cdots d_q x_n
\\
=
n! (1-q)^n
q^{ (r+1) \binom{n}{2} + \binom{n}{3} }
\prod_{h=1}^{n-1} (q;q)_h
\sum_{\lambda \in \Par_n}
 s_\lambda(q^{r+1}, q^{r+2}, \dots, q^{r+n}).
\end{multline*}
Thus \eqref{eq:eval-qSelberg1} is obtained by specializing 
$x_i = q^{r+i}$ for $1 \le i \le n$ and $x_i = 0$ for $i > n$ 
in the Schur--Littlewood identity \eqref{eq:Littlewood}.

Next we apply \eqref{eq:qSelberg} with $s=1$ and $m=1$. 
Then we have
\begin{multline*}
\int_{0}^1 \cdots \int_{0}^1
 \prod_{i=1}^n x_i^r (1 - q x_i)
 \prod_{1 \le i< j \le n} \left| x_j - x_i \right|
 d_q x_1 \cdots d_q x_n
\\
=
n! (1-q)^n
q^{ (r+1) \binom{n}{2} + \binom{n}{3} }
\prod_{h=1}^n (q;q)_h
\sum_{\lambda \in \Par_n}
 s_\lambda(q^{r+1}, q^{r+2}, \dots, q^{r+n+1}).
\end{multline*}
For $\lambda \in \Par_{n+1}$ and $\lambda+(1^{n+1}) = (\lambda_1+1, \dots, \lambda_{n+1}+1)$, we have
\[
s_{\lambda+(1^{n+1})}(x_1, \dots, x_{n+1})
 = x_1 \cdots x_{n+1} s_\lambda(x_1, \dots, x_{n+1}).
\]
Hence, by using \eqref{eq:Littlewood}, we see that
\begin{align*}
\sum_{\lambda \in \Par_n}
 s_\lambda(x_1, \dots, x_{n+1})
&=
\sum_{\lambda \in \Par_{n+1}} s_\lambda(x_1, \dots, x_{n+1})
-
\sum_{\lambda \in \Par_{n+1}, \lambda_{n+1} > 0} s_\lambda(x_1, \dots, x_{n+1})
\\
&=
\frac{ 1 - x_1 \cdots x_{n+1} }
     { \prod_{i=1}^{n+1} (1 - x_i)
       \prod_{1 \le i < j \le n+1} (1 - x_i x_j) }.
\end{align*}
Thus \eqref{eq:eval-qSelberg2} is obtained by specializing $x_i = q^{r+i}$ 
for $1 \le i \le n+1$.

Finally we use \eqref{eq:qSelberg} in the case $m=2$.
Then we have
\begin{multline*}
\int_{0}^1 \cdots \int_{0}^1
\prod_{i=1}^n x_i^r (qx_i ; q)_s
\prod_{1 \le i < j \le n} \left| x_j - x_i \right|^2
d_q x_1 \cdots d_q x_n
\\
=
n! (1-q)^n
q^{ (r+1) \binom{n}{2} + 2 \binom{n}{3} }
\prod_{h=s}^{n+s-1} (q;q)_h
\prod_{h=1}^{n-1} (q;q)_h
\sum_{\lambda \in \Par_n}
 s_\lambda(1, q, \dots, q^{n+s-1})
 s_\lambda(q^{r+1}, q^{r+2}, \dots, q^{r+n}).
\end{multline*}
Hence \eqref{eq:eval-qSelberg3} follows from the Cauchy identity \eqref{eq:Cauchy}.
\end{proof}

\begin{remark}
We have product formulas for the $q$-Selberg integral in Theorem~\ref{thm:qKO} 
in the case $m=1$ and $s \in \{ 0, 1 \}$.
If $m=1$ and $s \ge 2$, then we can evaluate the $q$-Selberg integral by using 
a column-length restricted Schur--Littlewood formula 
\cite{King2}
\[
\sum_{\lambda \in \Par_n} s_\lambda(x)
 =
\frac{1}{ \prod_i (1 - x_i) \prod_{i<j} (1 - x_i x_j)}
\cdot
\sum_\mu (-1)^{(|\mu|-(n-1)p(\mu))/2} s_\mu,
\]
where $\mu$ runs over all partitions of the form $\mu =
(\alpha|\alpha+n)$ in the Frobinius notation, and $p(\mu)$ is the 
largest integer $k$ such that $\mu$ contains a $k \times k$ square. 
However we do not have simple product formulas for $s \ge 2$.
\end{remark}

\section{%
Variants of the $q$-Selberg integral
}
\label{sec:-variants-q}

Under the same idea as in the previous section, 
we use the Cauchy-type identities for classical group characters 
(see \cite{King1} and \cite{Koike})
to derive variants of the $q$-Selberg integral \eqref{eq:Askey}.

\begin{thm}
\label{thm:variant}
Let $n$ be a positive integer and $s$ be a nonnegative integer.
Then we have
\begin{multline}
\label{eq:variant1}
\int_{[0,1]^n}
 \prod_{i=1}^n x_i^r (qx_i;q)_s
 \prod_{1 \le i < j \le n} (1 - q^{s+1} x_i x_j)
 \prod_{1 \le i < j \le n} |x_j - x_i|^2
d_qx
\\
=
n! q^{ (r+1) \binom{n}{2} + 2 \binom{n}{3} }
\prod_{k=1}^{n-1} [k]_q!
\prod_{k=1}^n [s+2k-2]_q!
\frac{ \prod_{1 \le i < j \le n} [2n+2r+s+i+j-2]_q }
     { \prod_{i=1}^{2n+s-1} \prod_{j=1}^n [r+i+j-1]_q },
\end{multline}
\begin{multline}
\label{eq:variant2}
\int_{[0,1]^n}
 \prod_{i=1}^n x_i^r (qx_i;q)_s ( 1 + q^{(s+1)/2} x_i )
 \prod_{1 \le i < j \le n} (1 - q^{s+1} x_i x_j)
 \prod_{1 \le i < j \le n} |x_j - x_i|^2
d_qx
\\
=
n! q^{ (r+1) \binom{n}{2} + 2 \binom{n}{3} }
\prod_{k=1}^{n-1} [k]_q!
\prod_{k=1}^n [s+2k-2]_q!
\prod_{k=1}^n (1+q^{s/2+k-1/2})
\\
\times
\frac{ \prod_{i=1}^n [n+r+s/2+i-1/2]_q
       \prod_{1 \le i < j \le n} [2n+2r+s+i+j-1]_q }
     { \prod_{i=1}^{2n+s} \prod_{j=1}^n [r+i+j-1]_q },
\end{multline}
\begin{multline}
\label{eq:variant3}
\int_{[0,1]^n}
 \prod_{i=1}^n x_i^r (qx_i;q)_s (1 - q^{(s+1)/2} x_i)
 \prod_{1 \le i < j \le n} (1 - q^{s+1} x_i x_j)
 \prod_{1 \le i < j \le n} |x_j - x_i|^2
d_qx
\\
=
n! q^{ (r+1) \binom{n}{2} + 2 \binom{n}{3} }
\prod_{k=1}^{n-1} [k]_q!
\prod_{k=1}^n [s+2k-2]!
\prod_{k=1}^n [s/2+k-1/2]_q
\\
\times
\frac{ \prod_{i=1}^n (1 + q^{n+r+s/2+i-1/2})
       \prod_{1 \le i < j \le n} [2n+2r+s+i+j-1]_q }
     { \prod_{i=1}^{2n+s} \prod_{j=1}^n [r+i+j-1]_q },
\end{multline}
\begin{multline}
\label{eq:variant4}
\int_{[0,1]^n}
 \prod_{i=1}^n x_i^r (qx_i;q)_s
 \prod_{1 \le i \le j \le n} (1 - q^{s+1} x_i x_j)
 \prod_{1 \le i < j \le n} |x_j - x_i|^2
d_qx
\\
=
n! q^{ (r+1) \binom{n}{2} + 2 \binom{n}{3} }
\prod_{k=1}^{n-1} [k]_q!
\prod_{k=1}^n [s+2k-1]_q!
\frac{ \prod_{1 \le i \le j \le n} [2n+2r+s+i+j]_q }
     { \prod_{i=1}^{2n+s+1} \prod_{j=1}^n [r+i+j-1]_q }.
\end{multline}
\end{thm}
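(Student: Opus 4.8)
The plan is to imitate the strategy used in Section~\ref{sec:-evaluation-q}, but with the Cauchy identity replaced by the appropriate Cauchy-type identity for each family of classical group characters. The key observation is that Theorem~\ref{thm:qSelberg-Schur} produces, in its summand, a factor $s_\lambda(1,q,\dots,q^{n+s-1})$ times $s_\lambda$ of another alphabet; summing a single copy of $s_\lambda$ against a classical character should yield one of these ``dual Cauchy'' expansions. Concretely, I would first rewrite each left-hand side so that the extra product $\prod_{i<j}(1-q^{s+1}x_ix_j)$ (or $\prod_{i\le j}$, or the versions with the extra linear factors $(1\pm q^{(s+1)/2}x_i)$) is recognized as the reciprocal of the ``diagonal'' part of a Cauchy kernel. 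That is, I would use the identities (see \cite{King1}, \cite{Koike})
\[
\sum_\lambda \mathrm{sp}_\lambda(x)\, s_\lambda(y)
 = \frac{\prod_{i<j}(1-y_iy_j)}{\prod_{i,j}(1-x_iy_j)},
\quad
\sum_\lambda \mathrm{o}_\lambda(x)\, s_\lambda(y)
 = \frac{\prod_{i\le j}(1-y_iy_j)}{\prod_{i,j}(1-x_iy_j)},
\]
together with the two odd-orthogonal variants that carry the extra factors $\prod_k(1\pm y_k)$ on the right, where $\mathrm{sp}_\lambda$, $\mathrm{o}_\lambda$ are the symplectic and orthogonal characters.

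The concrete execution would run as follows. Starting from \eqref{eq:qSelberg-Schur} with $m=1$, I would absorb the symmetric factor $\prod_{i<j}(1-q^{s+1}x_ix_j)$ into the integrand and, after the substitution $x_i = q^{\lambda_i+n-i}$ coming from Proposition~\ref{prop:general} and Lemma~\ref{lem:q-Schur}, match the resulting sum over $\lambda$ to a principal specialization of one of the Cauchy-type kernels above. The point is that the principal specialization $s_\lambda(1,q,\dots,q^{n+s-1})$ that appears naturally in Theorem~\ref{thm:qSelberg-Schur}, when paired in the sum $\sum_\lambda q^{(r+1)|\lambda|}s_\lambda(1,q,\dots,q^{n+s-1})\cdot(\text{character}_\lambda)$, collapses by the dual Cauchy identity to a finite product. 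For \eqref{eq:variant1} the symplectic identity (with $y_i = q^{s+i}$, say, so that $\prod_{i<j}(1-y_iy_j)$ reproduces the added factor) will give exactly $\prod_{i<j}(1-q^{s+1}x_ix_j)$ after reindexing. For \eqref{eq:variant4} one uses the even-orthogonal ($\mathrm{o}_\lambda$) identity with $\prod_{i\le j}$, and for \eqref{eq:variant2}, \eqref{eq:variant3} the odd-orthogonal identities, whose extra $\prod_k(1\pm y_k)$ factors produce the linear factors $(1\pm q^{(s+1)/2}x_i)$ in the integrand. Then I would specialize the principal alphabet $x_i=q^{r+i}$ and read off the product on the right of each \eqref{eq:variant1}--\eqref{eq:variant4}, converting $(q;q)_h$ and powers of $q$ into $q$-factorials $[\,\cdot\,]_q!$ and $q$-integers $[\,\cdot\,]_q$.

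The main obstacle, and the place where care is needed, is matching the exponents and the alphabets so that the quadratic factor $\prod(1-q^{s+1}x_ix_j)$ (or its $\le$ variant, or the linear corrections) comes out with exactly the shift $q^{s+1}$ and not some other power of $q$. Because the Cauchy-type identities are homogeneous, the correct choice of the auxiliary alphabet $y$ is forced once one demands that $\prod_{i<j}(1-y_iy_j)$ (with $y$ evaluated at a principal specialization) equals $\prod_{i<j}(1-q^{s+1}x_ix_j)$ under $x_i\mapsto$ the geometric values; this pins down $y_i$ up to the scaling that then must be reconciled with the principal specialization $s_\lambda(1,q,\dots,q^{n+s-1})$ already present. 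I expect this bookkeeping — tracking the prefactor $(1-q)^n q^{(r+1)\binom n2 + 2\binom n3}$, the products $\prod_{h=s}^{n+s-1}(q;q)_h$ from Lemma~\ref{lem:q-Schur}, and the denominators $\prod_{i,j}(1-q^{r+i}y_j)$ from the Cauchy kernel, and collecting everything into the stated ratio of $q$-factorials — to be the bulk of the work, but it is routine once the right identity and alphabet are in place. The half-integer exponents $q^{(s+1)/2}$ and $q^{s/2+k-1/2}$ in \eqref{eq:variant2}, \eqref{eq:variant3} are a purely notational artifact of the odd-orthogonal characters' extra factors and cause no real difficulty, though I would remark that these identities should be read as identities in $q^{1/2}$.
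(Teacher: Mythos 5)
You have the right toolbox (the Cauchy-type identities for symplectic, orthogonal and spinor characters) and the right overall scheme (discretize the Jackson integral via Proposition~\ref{prop:general} and collapse the resulting sum over $\lambda$ with a Cauchy-type identity, as in Section~\ref{sec:-evaluation-q}), but the mechanism you describe for the new quadratic factor is backwards, and as stated the plan cannot be executed. You propose to obtain $\prod_{i<j}(1-q^{s+1}x_ix_j)$ from the numerator $\prod_{i<j}(1-y_iy_j)$ of the Cauchy kernel under a fixed specialization such as $y_i=q^{s+i}$. But that numerator is then a constant, independent of the integration variables, whereas $\prod_{i<j}(1-q^{s+1}x_ix_j)$, after the substitution $x_i=q^{\lambda_i+n-i}$ coming from Proposition~\ref{prop:general}, depends on $\lambda$; a $\lambda$-independent product cannot reproduce it. In the correct argument the Cauchy-kernel numerator plays an entirely different role: with $u_j=q^{r+N+j-1/2}$ it becomes the factor $\prod_{1\le i<j\le n}[2n+2r+s+i+j-2]_q$ in the \emph{answer}, not anything in the integrand.

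The missing idea is a Weyl-denominator-formula computation identifying the integrand with a classical character. For $s=2l+1$ and $N=n+l$, the combination
\[
\prod_{i=1}^n (qx_i;q)_s\,\prod_{1\le i<j\le n}(1-q^{s+1}x_ix_j)\,\prod_{1\le i<j\le n}(x_j-x_i),
\]
evaluated at $x_i=q^{\lambda_i+n-i}$, equals (up to an explicit power of $q^{|\lambda|}$ and a $\lambda$-independent constant) the principal specialization $s^C_{\langle\lambda\rangle}(q^{N-1/2},\dots,q^{1/2})$: the quadratic factor is precisely the ``$\varepsilon_i+\varepsilon_j$'' part of the symplectic Weyl numerator, in the same way that Lemma~\ref{lem:q-Schur} identifies $\prod_i(qx_i;q)_s\prod_{i<j}(x_j-x_i)$ at the lattice points with $s_\lambda(1,q,\dots,q^{n+s-1})$ in type $A$. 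Only after this identification does the sum take the form $\sum_\lambda s^C_{\langle\lambda\rangle}(x)\,s_\lambda(u)$ with \emph{both} alphabets fixed, to which \eqref{eq:c-cauchy} applies. Relatedly, you cannot dismiss the half-integer exponents as a notational artifact: whether the lattice points match a type $C$, $B$, $D$ or spinor principal specialization depends on the parity of $s$, so each of \eqref{eq:variant1}--\eqref{eq:variant4} requires a genuine case split, with a different character for $s$ odd and $s$ even (e.g.\ $s^C_{\langle\lambda\rangle}$ versus $s^D_{[\lambda+1/2]}$ for \eqref{eq:variant1}).
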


In order to prove this theorem, we use the classical group characters.
Let $\lambda \in \Par_N$ be a partition of length $\le N$.
The \emph{symplectic Schur function} $s^C_{\langle \lambda \rangle}(x_1, \dots, x_N)$, 
and the \emph{orthogonal Schur function} 
$s^B_{[\lambda]}(x_1, \dots, x_N)$, $s^D_{[\lambda]}(x_1, \dots, x_N)$ are defined by putting
\begin{align}
s^C_{\langle \lambda \rangle}(x_1, \dots, x_N)
 &=
\frac{ \det \left( x_i^{\lambda_j+N-j+1} - x_i^{-(\lambda_j+N-j+1)} \right)_{1 \le i, j \le N} }
     { \det \left( x_i^{N-j+1} - x_i^{-(N-j+1)} \right)_{1 \le i, j \le N} },
\label{eq:c-schur}
\\
s^B_{[ \lambda ]}(x_1, \dots, x_N)
 &=
\frac{ \det \left( x_i^{\lambda_j+N-j+1/2} - x_i^{-(\lambda_j+N-j+1/2)} \right)_{1 \le i, j \le N} }
     { \det \left( x_i^{N-j+1/2} - x_i^{-(N-j+1/2)} \right)_{1 \le i, j \le N} },
\label{eq:b-schur}
\\
s^D_{\langle \lambda \rangle}(x_1, \dots, x_N)
 &=
\begin{cases}
\dfrac{ \det \left( x_i^{\lambda_j+N-j} + x_i^{-(\lambda_j+N-j)} \right)_{1 \le i, j \le N} }
      { \dfrac{1}{2} \det \left( x_i^{N-j} + x_i^{-(N-j)} \right)_{1 \le i, j \le N} }
 &\text{if $\lambda_N > 0$,} \\
\dfrac{ \det \left( x_i^{\lambda_j+N-j} + x_i^{-(\lambda_j+N-j)} \right)_{1 \le i, j \le N} }
      { \det \left( x_i^{N-j} + x_i^{-(N-j)} \right)_{1 \le i, j \le N} }
 &\text{if $\lambda_N = 0$.}
\end{cases}
\label{eq:d-schur}
\end{align}
For a decreasing sequence of the form $\lambda+1/2 
= (\lambda_1+1/2, \dots, \lambda_N+1/2)$, where $\lambda \in \Par_N$, 
we define the \emph{spinor Schur function} $s^B_{[\lambda+1/2]}(x_1, \dots, x_N)$ and 
$s^D_{[\lambda+1/2]}(x_1, \dots, x_N)$ by putting
\begin{align}
s^B_{[ \lambda + 1/2 ]}(x_1, \dots, x_N)
 &=
\frac{ \det \left( x_i^{\lambda_j+N-j+1} - x_i^{-(\lambda_j+N-j+1)} \right)_{1 \le i, j \le N} }
     { \det \left( x_i^{N-j+1/2} - x_i^{-(N-j+1/2)} \right)_{1 \le i, j \le N} },
\label{eq:bs-schur1}
\\
s^D_{ [ \lambda + 1/2 ] }(x_1, \dots, x_N)
 &=
\frac{ \det \left( x_i^{\lambda_j+N-j+1/2} + x_i^{-(\lambda_j+N-j+1/2)} \right)_{1 \le i, j \le N} }
     { \dfrac{1}{2} \det \left( x_i^{N-j} + x_i^{-(N-j)} \right)_{1 \le i, j \le N} }.
\label{eq:ds-schur}
\end{align}
These schur functions are irreducible characters of classical groups.
Note that
$$
s^B_{[\lambda+1/2]}(x_1, \dots, x_N)
 =
\prod_{i=1}^N (x_i^{1/2} + x_i^{-1/2})
s^C_{\langle \lambda \rangle}(x_1, \dots, x_N).
$$
Then we have the following Cauchy-type identities, 
which can be proved by using the Cauchy--Binet formula.

\begin{prop}
\label{prop:cauchy}
Let $N$ and $n$ be positive integers satisfying $N \ge n$.
Then we have
\begin{align}
\sum_{\lambda \in \Par_n}
 s^C_{\langle \lambda \rangle}(x_1, \dots, x_N)
 s_\lambda(u_1, \dots, u_n)
&=
\frac{ \prod_{1 \le i < j \le n} (1 - u_i u_j) }
     { \prod_{i=1}^N \prod_{j=1}^n (1 - x_i u_j) (1 - x_i^{-1} u_j) },
\label{eq:c-cauchy}
\\
\sum_{\lambda \in \Par_n}
 s^B_{ [ \lambda ] }(x_1, \dots, x_N)
 s_\lambda(u_1, \dots, u_n)
&=
\frac{ \prod_{i=1}^n (1 + u_i) \prod_{1 \le i < j \le n} (1 - u_i u_j) }
     { \prod_{i=1}^N \prod_{j=1}^n (1 - x_i u_j) (1 - x_i^{-1} u_j) },
\label{eq:b-cauchy}
\\
\sum_{\lambda \in \Par_n}
 s^B_{ [ \lambda +1/2 ] }(x_1, \dots, x_N)
 s_\lambda(u_1, \dots, u_n)
&=
\frac{ \prod_{i=1}^n (x_i^{1/2} + x_i^{-1/2} ) \prod_{1 \le i < j \le n} (1 - u_i u_j) }
     { \prod_{i=1}^N \prod_{j=1}^n (1 - x_i u_j) (1 - x_i^{-1} u_j) },
\label{eq:bs-cauchy}
\\
\sum_{\lambda \in \Par_n}
 s^D_{ [ \lambda ] }(x_1, \dots, x_N)
 s_\lambda(u_1, \dots, u_n)
&=
\frac{ \prod_{1 \le i \le j \le n} (1 - u_i u_j) }
     { \prod_{i=1}^N \prod_{j=1}^n (1 - x_i u_j) (1 - x_i^{-1} u_j) },
\label{eq:d-cauchy}
\\
\sum_{\lambda \in \Par_n}
 s^D_{ [ \lambda + 1/2 ] }(x_1, \dots, x_N)
 s_\lambda(u_1, \dots, u_n)
&=
\frac{ \prod_{i=1}^n (x_i^{1/2} + x_i^{-1/2})
       \prod_{i=1}^n (1 -u_i) \prod_{1 \le i < j \le n} (1 - u_i u_j) }
     { \prod_{i=1}^N \prod_{j=1}^n (1 - x_i u_j) (1 - x_i^{-1} u_j) }.
\label{eq:ds-cauchy}
\end{align}
\end{prop}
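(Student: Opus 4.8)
The plan is to reduce all five identities to a single evaluation of a Cauchy--Vandermonde determinant, the reduction being carried out by the Cauchy--Binet formula; I give the argument for the symplectic identity \eqref{eq:c-cauchy} and indicate the changes needed for the other four. Throughout, the identities are read as equalities of formal power series in $u_1,\dots,u_n$, so that all sums converge coefficientwise and may be rearranged freely. First I would pass to determinantal form: write $s_\lambda(u_1,\dots,u_n)=\det\bigl(u_i^{\ell_j}\bigr)_{1\le i,j\le n}\big/\prod_{1\le i<j\le n}(u_i-u_j)$ with $\ell_j=\lambda_j+n-j$, so that as $\lambda$ runs over $\Par_n$ the tuple $(\ell_1,\dots,\ell_n)$ runs over all strictly decreasing $n$-tuples of nonnegative integers; and in \eqref{eq:c-schur} put $\mu_j=\lambda_j+N-j+1$, noting that $\ell(\lambda)\le n\le N$ forces $\mu_j=\ell_j+c$ with $c=N-n+1$ for $j\le n$, whereas $\mu_j=N-j+1$ does not depend on $\lambda$ for $n<j\le N$. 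Expanding the $N\times N$ numerator determinant of \eqref{eq:c-schur} by the Laplace rule along its first $n$ columns, inserting the result into $\sum_\lambda s^C_{\langle\lambda\rangle}(x)s_\lambda(u)$, dividing by the Weyl denominator $W(x)=\det\bigl(x_i^{N-j+1}-x_i^{-(N-j+1)}\bigr)_{1\le i,j\le N}$ and by $\prod_{i<j}(u_i-u_j)$, and interchanging the sum over $\lambda$ with the sum over $n$-element subsets $T\subseteq[N]$, the sum over $\lambda$ inside each $T$-term becomes $\sum_{\ell_1>\dots>\ell_n\ge0}\det\bigl(x_i^{\ell_j+c}-x_i^{-(\ell_j+c)}\bigr)_{i\in T,\,j}\det\bigl(u_k^{\ell_j}\bigr)_{j,k}$, which by Cauchy--Binet (the two signs $(-1)^{\binom n2}$ produced by reordering the $\ell_j$ cancel) equals $\det(C_{ik})_{i\in T,\,1\le k\le n}$ with $C_{ik}=\sum_{\ell\ge0}\bigl(x_i^{\ell+c}-x_i^{-(\ell+c)}\bigr)u_k^{\ell}$. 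Undoing the Laplace expansion, the sum over $T$ reassembles into the single $N\times N$ determinant $\mathcal D=\det\mathcal M$ whose first $n$ columns are $(C_{ik})_{1\le i\le N}$ and whose last $N-n$ columns are $\bigl(x_i^{N-j+1}-x_i^{-(N-j+1)}\bigr)_{1\le i\le N}$; thus the left side of \eqref{eq:c-cauchy} equals $\mathcal D\big/\bigl(W(x)\prod_{i<j}(u_i-u_j)\bigr)$.

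The crux is then to recognise $\mathcal D$ as a Cauchy--Vandermonde determinant in the variables $p_i=x_i+x_i^{-1}$. The last $N-n$ columns of $\mathcal M$ carry exactly the exponents $1,2,\dots,N-n$, so adding suitable linear combinations of them to the $k$th column turns $C_{ik}$ into the complete geometric sum $\sum_{m\ge1}(x_i^m-x_i^{-m})u_k^{m-1}=(x_i-x_i^{-1})\big/\bigl((1-x_iu_k)(1-x_i^{-1}u_k)\bigr)$, at the cost of a monomial prefactor $\prod_k u_k^{1-c}$. Pulling $x_i-x_i^{-1}$ out of the $i$th row of $\mathcal M$, reducing each of the last $N-n$ columns (now $(x_i^m-x_i^{-m})/(x_i-x_i^{-1})$, a monic polynomial of degree $m-1$ in $p_i$) to a power of $p_i$, and using $1\big/\bigl((1-x_iu_k)(1-x_i^{-1}u_k)\bigr)=-u_k^{-1}/(p_i-q_k)$ with $q_k=u_k+u_k^{-1}$, we turn $\mathcal D$ into a monomial prefactor times the Cauchy--Vandermonde determinant $\det\bigl[\,\bigl(\tfrac{1}{p_i-q_k}\bigr)_{1\le i\le N,\,1\le k\le n}\,\big|\,\bigl(p_i^{\,N-j}\bigr)_{1\le i\le N,\,n<j\le N}\,\bigr]$, which the classical formula evaluates as $\prod_{i<i'}(p_i-p_{i'})\prod_{k<k'}(q_{k'}-q_k)\big/\prod_{i,k}(p_i-q_k)$ up to an explicit sign. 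Substituting $p_i-q_k=-(1-x_iu_k)(1-x_i^{-1}u_k)/u_k$, $q_{k'}-q_k=(u_{k'}-u_k)(u_ku_{k'}-1)/(u_ku_{k'})$, $p_i-p_{i'}=(x_i-x_{i'})(1-x_i^{-1}x_{i'}^{-1})$, and rewriting $W(x)=\prod_i(x_i-x_i^{-1})\prod_{i<i'}(p_i-p_{i'})$ in the same way, all the stray monomials and signs cancel and \eqref{eq:c-cauchy} drops out; in particular the factor $\prod_{1\le i<j\le n}(1-u_iu_j)$ is exactly what survives from $\prod_{k<k'}(q_{k'}-q_k)$ after the cancellation against $\prod_{i<j}(u_i-u_j)$.

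For \eqref{eq:b-cauchy}, \eqref{eq:d-cauchy}, \eqref{eq:ds-cauchy} I would run the same computation, replacing $x^m-x^{-m}$ by $x^{m+1/2}-x^{-(m+1/2)}$, $x^m+x^{-m}$, $x^{m+1/2}+x^{-(m+1/2)}$ respectively and adjusting the boundary exponent. The complete geometric sum now evaluates to $(x_i^{1/2}-x_i^{-1/2})(1+u_k)\big/\bigl((1-x_iu_k)(1-x_i^{-1}u_k)\bigr)$ in the $B$ case, to $\bigl(2-(x_i+x_i^{-1})u_k\bigr)\big/\bigl((1-x_iu_k)(1-x_i^{-1}u_k)\bigr)=1+(u_k-u_k^{-1})/(p_i-q_k)$ in the $D$ case, and to $(x_i^{1/2}+x_i^{-1/2})(1-u_k)\big/\bigl((1-x_iu_k)(1-x_i^{-1}u_k)\bigr)$ in the $s^D_{[\lambda+1/2]}$ case; after the same reduction to a Cauchy--Vandermonde determinant these account for the extra factors $\prod_i(1+u_i)$, $\prod_i(u_i-u_i^{-1})$ (which turns $\prod_{i<j}(1-u_iu_j)$ into $\prod_{i\le j}(1-u_iu_j)$ up to a monomial), and $\prod_i(1-u_i)$ together with the surviving factor $\prod_i(x_i^{1/2}+x_i^{-1/2})$, the latter appearing because the row factor of $\mathcal M$ no longer cancels against that of the type-$D$ Weyl denominator; in the $D$ cases the ``$1$'' term of the geometric sum is absorbed by the constant column of $\mathcal M$, the only extra care being needed in the boundary case $N=n$, where the two-line normalization in \eqref{eq:d-schur} enters. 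Finally \eqref{eq:bs-cauchy} needs no separate proof: it is \eqref{eq:c-cauchy} multiplied through by $\prod_{i=1}^N(x_i^{1/2}+x_i^{-1/2})$, since $s^B_{[\lambda+1/2]}=\prod_i(x_i^{1/2}+x_i^{-1/2})\,s^C_{\langle\lambda\rangle}$.

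The Laplace expansion and Cauchy--Binet steps are routine once the hypothesis $N\ge n$ is used to confine the $\lambda$-dependence to the first $n$ columns. I expect the main obstacle to be the end game: performing the column operations and row factorings that turn $\mathcal M$ into a genuine Cauchy--Vandermonde matrix in the $p_i$'s, and then keeping track of the fairly large number of monomial prefactors and signs that must all cancel, together with the type-dependent extra linear factors in $u$ and the boundary case $N=n$ in the orthogonal identities.
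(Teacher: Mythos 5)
Your proposal is correct and follows exactly the route the paper indicates: the paper gives no details beyond "these can be proved by using the Cauchy--Binet formula" (citing King and Koike), and your argument is a faithful, correctly executed elaboration of that method, reducing each identity via Laplace expansion and Cauchy--Binet to a Cauchy--Vandermonde determinant in $p_i=x_i+x_i^{-1}$, $q_k=u_k+u_k^{-1}$. The type-by-type geometric-sum evaluations and the deduction of \eqref{eq:bs-cauchy} from \eqref{eq:c-cauchy} all check out.
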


\begin{proof}[Proof of Theorem~\ref{thm:variant}]
Since the arguments are the same, we illustrate how to prove \eqref{eq:variant1} 
in the case $s$ is odd.
For other cases, see Table~\ref{table:proof}.

\begin{table}[htbp]
\caption{Proof of Theorem~\ref{thm:variant}}
\label{table:proof}
\centering
\begin{tabular}{c|c|c|c}
Identity            & Parity of $s$ & Character & Specialization \\
\hline
\eqref{eq:variant1} & odd           & $s^C_{\langle \lambda \rangle}$ & $x_i = q^{N-i+1/2}$ \\
\hline
\eqref{eq:variant1} & even           & $s^D_{[\lambda +1/2]}$          & $x_i = q^{N-i}$ \\
\hline
\eqref{eq:variant2} & odd           & $s^C_{\langle \lambda \rangle}$ & $x_i = q^{N-i+1}$ \\
\hline
\eqref{eq:variant2} & even          & $s^D_{[\lambda + 1/2]}$         & $x_i = q^{N-i+1/2}$ \\
\hline
\eqref{eq:variant3} & odd           & $s^D_{[\lambda]}$               & $x_i = q^{N-i}$ \\
\hline
\eqref{eq:variant3} & even          & $s^B_{[\lambda]}$               & $x_i = q^{N-i+1/2}$ \\
\hline
\eqref{eq:variant4} & odd           & $s^D_{[\lambda]}$               & $x_i = q^{N-i+1/2}$ \\
\hline
\eqref{eq:variant4} & even          & $s^B_{[\lambda]}$               & $x_i = q^{N-i+1}$ \\
\end{tabular}
\end{table}

Suppose that $s = 2l+1$ is odd and put $N = n+l$.
If follows from the Weyl denominator formula (or the Vandermonde determinant) that, 
for a partition $\lambda$ of length $\le N$, 
\begin{multline*}
s^C_{\langle \lambda \rangle}(q^{N-1/2}, q^{N-3/2}, \dots, q^{3/2}, q^{1/2})
\\
 =
q^{ - (N-1/2) |\lambda| }
\prod_{i=1}^N
 \frac{ 1 - q^{\lambda_i+N-i+1} }
      { 1 - q^{N-i+1} }
\prod_{1 \le i < j \le N}
 \frac{ q^{\lambda_j+N-j+1} - q^{\lambda_i+N-i+1} }
      { q^{N-j+1} - q^{N-i+1} }
 \frac{ 1 - q^{\lambda_i+N-i+1} \cdot q^{\lambda_j+N-j+1} }
      { 1 - q^{N-i+1} \cdot q^{N-j+1} }.
\end{multline*}
If $l(\lambda) \le n$, then we have
\[
s^C_{\langle \lambda \rangle}(q^{N-1/2}, q^{N-3/2}, \dots, q^{3/2}, q^{1/2})
 =
q^{- (N-1/2) |\lambda| - \binom{n}{3}}
\frac{ 1 }
     { \prod_{k=1}^n (q;q)_{s+2k-2} }
\cdot
F(q^{\lambda_1+n-1}, \dots, q^{\lambda_n}),
\]
where
\[
F(x_1, \dots, x_n)
 =
\prod_{i=1}^n (qx_i;q)_s
\prod_{1 \le i < j \le n} (1 - q^{s+1} x_i x_j)
\prod_{1 \le i < j \le n} (x_j - x_i).
\]
Hence, by applying Proposition~\ref{prop:general}, we see that
\begin{align*}
&
\int_{[0,1]^n}
 \prod_{i=1}^n x_i^r (qx_i;q)_s
 \prod_{1 \le i < j \le n} (1 - q^{s+1} x_i x_j)
 \prod_{1 \le i < j \le n} |x_j - x_i|^2
d_qx
\\
&=
n! (1-q)^n
\sum_{\lambda \in \Par_n}
 q^{ |\lambda| + \binom{n}{2} }
 \prod_{i=1}^n \left( q^{\lambda_i+n-i} \right)^r
 F(q^{\lambda_1+n-1}, \dots, q^{\lambda_n})
 \prod_{1 \le i < j \le n} (q^{\lambda_j+n-j} - q^{\lambda_i+n-i})
\\
&=
n! (1-q)^n
q^{ (r+1) \binom{n}{2} + 2 \binom{n}{3} }
\prod_{k=1}^{n-1} [k]_q!
\prod_{k=1}^n [s+2k-2]_q!
\\
&\quad\times
\sum_{\lambda \in \Par_n}
 s^C_{\langle \lambda \rangle}(q^{N-1/2}, q^{N-3/2}, \dots, q^{3/2}, q^{1/2})
 \cdot
 q^{(r+N+1/2) |\lambda|} s_\lambda(1, q, \dots, q^{n-1})
\end{align*}
Now, by specializing $x_i = q^{N-i+1/2}$ and $u_j = q^{r+N+j-1/2}$ in 
the Cauchy-type identity \eqref{eq:c-cauchy}, 
we obtain \eqref{eq:variant1}.
\end{proof}

Similarly, by using the Cauchy-type formula for Schur functions corresponding 
to rational irreducible representations of the general linear group, 
we can derive

\begin{thm}
\label{thm:rat-variant}
Let $n$, $m$ and $l$ be nonnegative integers with $N = n+m+l > 0$.
Then we have
\begin{multline}
\label{eq:rat-variant}
\int_{[0,1]^{n+m}}
 \prod_{i=1}^n x_i^r (qx_i;q)_l
 \prod_{j=1}^m y_j^s (qy_j;q)_l
 \prod_{i=1}^n \prod_{j=1}^m (1 - q^l x_i y_j)
 \prod_{1 \le i < j \le n} (x_j - x_i)^2
 \prod_{1 \le i < j \le m} (y_j - y_i)^2
 d_qx d_qy
\\
 =
n! m! q^{(r+1) \binom{n}{2} + (s+1) \binom{m}{2} + 2 \binom{n}{3} + 2 \binom{m}{3}}
\frac{ \prod_{k=1}^{N-1} [k]_q! \prod_{k=1}^{n-1} [k]_q! \prod_{k=1}^{m-1} [k]_q! }
     { \prod_{k=1}^{l-1} [k]_q! }
\\
\times
\frac{ \prod_{i=1}^n \prod_{j=1}^m [N+r+s+i+j-1] }
     { \prod_{i=1}^n \prod_{k=1}^N [r+i+k-1]_q
       \prod_{j=1}^m \prod_{k=1}^N [s+j+k-1]_q }.
\end{multline}
\end{thm}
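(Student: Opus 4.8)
The plan is to run the same three-step argument that proves Theorem~\ref{thm:variant}, with the symplectic and orthogonal characters replaced by the characters of the rational irreducible representations of $GL_N$. These are indexed by pairs of partitions $(\lambda,\mu)$ with $\ell(\lambda)\le n$, $\ell(\mu)\le m$ and $n+m\le N$: writing $\nu=(\lambda_1,\dots,\lambda_n,0,\dots,0,-\mu_m,\dots,-\mu_1)\in\Z^N$ for the corresponding weakly decreasing sequence, with $l=N-n-m$ zeros in the middle, the character is the rational Schur function
\[
s_{[\lambda;\mu]}(x_1,\dots,x_N)
 =
\frac{\det\bigl(x_i^{\nu_j+N-j}\bigr)_{1\le i,j\le N}}
     {\det\bigl(x_i^{N-j}\bigr)_{1\le i,j\le N}}.
\]
First I would record the Cauchy-type identity that plays the role of Proposition~\ref{prop:cauchy}: for $N\ge n+m$,
\[
\sum_{\lambda\in\Par_n}\sum_{\mu\in\Par_m}
 s_{[\lambda;\mu]}(x_1,\dots,x_N)\,s_\lambda(u_1,\dots,u_n)\,s_\mu(v_1,\dots,v_m)
 =
\frac{\prod_{i=1}^n\prod_{j=1}^m(1-u_iv_j)}
     {\prod_{i=1}^N\prod_{k=1}^n(1-x_iu_k)\cdot\prod_{i=1}^N\prod_{k=1}^m(1-x_i^{-1}v_k)};
\]
like the identities of Proposition~\ref{prop:cauchy}, this follows from the Cauchy--Binet formula applied to the bialternant expression, and it is the mixed-tensor Cauchy identity of Koike \cite{Koike}.

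Next I would compute the principal specialization of $s_{[\lambda;\mu]}$, the composite analogue of Lemma~\ref{lem:q-Schur}. Setting $x_i=q^{N-i}$ turns both determinants into (generalized) Vandermonde determinants, so that
\[
s_{[\lambda;\mu]}(1,q,\dots,q^{N-1})
 =
\prod_{1\le i<j\le N}\frac{q^{a_i}-q^{a_j}}{q^{N-i}-q^{N-j}},
\qquad a_j=\nu_j+N-j .
\]
Partitioning the $\binom N2$ pairs according to whether both indices lie in the block $A=\{1,\dots,n\}$, both in the middle block $B=\{n+1,\dots,n+l\}$, both in $C=\{n+l+1,\dots,N\}$, or in two distinct blocks, one reads off the six subproducts: the $A$--$A$ pairs give $s_\lambda(1,q,\dots,q^{n-1})$, the $C$--$C$ pairs give $s_\mu(1,q,\dots,q^{m-1})$ (each up to a power of $q$ and a product of $(q;q)_h$'s), the $A$--$B$ and $B$--$C$ pairs give $\prod_i(q\cdot q^{\lambda_i+n-i};q)_l$ and $\prod_j(q\cdot q^{\mu_j+m-j};q)_l$, the mixed $A$--$C$ pairs produce the pairwise coupling factor, and the $B$--$B$ pairs cancel against the matching part of the denominator. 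Collecting everything, this yields an identity of the shape
\[
s_{[\lambda;\mu]}(1,q,\dots,q^{N-1})
 =
q^{-c(\lambda,\mu)}\,C_{n,m,l}\,
 G\bigl(q^{\lambda_1+n-1},\dots,q^{\lambda_n};\,q^{\mu_1+m-1},\dots,q^{\mu_m}\bigr),
\]
where $C_{n,m,l}$ is an explicit constant built from $(q;q)_h$'s, $c(\lambda,\mu)$ is an explicit $q$-exponent affine in $|\lambda|$ and $|\mu|$, and $G(x;y)$ is the product of the shifted $q$-Pochhammer symbols $\prod_i(qx_i;q)_l\prod_j(qy_j;q)_l$, the pairwise coupling factor, and one copy of each of the Vandermonde products that together make up the integrand of \eqref{eq:rat-variant}.

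With these two ingredients in place, the proof runs parallel to that of Theorem~\ref{thm:variant}. Applying Proposition~\ref{prop:general} once in the $x$-variables and once in the $y$-variables (the integrand is symmetric and vanishes on the diagonals within each group of variables separately) rewrites the left-hand side of \eqref{eq:rat-variant} as $n!\,m!\,(1-q)^{n+m}$ times a double sum over $\Par_n\times\Par_m$ of $q^{|\lambda|+\binom n2+|\mu|+\binom m2}$ times the integrand evaluated at $x_i=q^{\lambda_i+n-i}$, $y_j=q^{\mu_j+m-j}$. There the integrand splits as $\prod_ix_i^r\prod_jy_j^s$, one more copy of each Vandermonde, and $G(x;y)$; replacing $G$ by the principal specialization above and evaluating the leftover Vandermondes by the usual bialternant computation (as after Lemma~\ref{lem:q-Schur}) turns the double sum into
\[
q^{(\ast)}\cdot\bigl(\text{an explicit product of }[k]_q!\text{'s}\bigr)\cdot
\sum_{\lambda\in\Par_n}\sum_{\mu\in\Par_m}
 s_{[\lambda;\mu]}(1,q,\dots,q^{N-1})\,
 q^{a|\lambda|}s_\lambda(1,q,\dots,q^{n-1})\,
 q^{b|\mu|}s_\mu(1,q,\dots,q^{m-1}),
\]
with constants $a$ and $b$ that turn out to be $a=r+1$ and $b=N+s$. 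Finally, specializing $x_i=q^{N-i}$, $u_k=q^{a+k-1}$, $v_k=q^{b+k-1}$ in the Cauchy identity evaluates this double sum: the numerator $\prod_{i,j}(1-u_iv_j)$ produces $\prod_{1\le i\le n}\prod_{1\le j\le m}[N+r+s+i+j-1]_q$, and the two denominator products produce $\prod_{i=1}^n\prod_{k=1}^N[r+i+k-1]_q$ and $\prod_{j=1}^m\prod_{k=1}^N[s+j+k-1]_q$, after pulling out the appropriate powers of $q$ and of $1-q$.

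I expect the hard part to be the composite principal-specialization computation of the second paragraph, that is, the bookkeeping in the block decomposition: one must track the $q$-exponents contributed by each of the six kinds of index pairs (and by the matching pieces of the denominator) carefully enough to confirm that they recombine into the prefactor $q^{(r+1)\binom n2+(s+1)\binom m2+2\binom n3+2\binom m3}$ and the $q$-factorial product $\prod_{k=1}^{N-1}[k]_q!\,\prod_{k=1}^{n-1}[k]_q!\,\prod_{k=1}^{m-1}[k]_q!\big/\prod_{k=1}^{l-1}[k]_q!$ of \eqref{eq:rat-variant}, and to pin down $a$, $b$ and the exact specialization points so that the Cauchy identity is applicable (convergence of the specialized series being automatic for $0<q<1$). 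The degenerate cases $l=0$ and $l=1$, where $\prod_{k=1}^{l-1}(q;q)_k$ is an empty product, only need a remark; the argument is uniform in $l\ge 0$ provided $N=n+m+l$.
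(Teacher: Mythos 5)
Your proposal follows essentially the same route as the paper: the authors also introduce the rational (composite) Schur function $s_{\Lambda(\lambda,\mu)}$, state the same mixed-tensor Cauchy identity (their Proposition~\ref{prop:rat-cauchy}), compute its principal specialization at $1,q,\dots,q^{N-1}$ so that it factors through the integrand $G$, apply the two-block Jackson-integral lemma (their Proposition~\ref{prop:general2}, which is exactly your ``apply Proposition~\ref{prop:general} in each group of variables''), and finish with the specializations $u_j=q^{r+j}$, $v_j=q^{s+N+j-1}$, matching your $a=r+1$, $b=N+s$. The block-decomposition bookkeeping you flag as the hard part is indeed where the work lies, but it is the same computation the paper carries out (and states only as a formula), so there is nothing structurally different to report.
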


In order to prove this theorem, we use the generalized Schur function defined by
\[
s_\Lambda(x_1, \dots, x_N)
 =
\frac{ \det \left( x_i^{\Lambda_j+N-j} \right)_{1 \le i, j \le N} }
     { \det \left( x_i^{N-j} \right)_{1 \le i, j \le N} },
\]
where $\Lambda \in \Z^n$ is a decreasing sequence of integers.
For two partitions $\lambda$ and $\mu$ with $l(\lambda) + l(\mu) \le N$, 
we define a weakly decreasing sequence $\Lambda(\lambda,\mu) \in \Z^N$ by
\[
\Lambda(\lambda,\mu)
 =
(\lambda_1, \dots, \lambda_{l(\lambda)}, 0, \dots, 0, -\mu_{l(\mu)}, \dots, -\mu_1).
\]
Then we have the following Cauchy-type formula:

\begin{prop}
\label{prop:rat-cauchy}
If $N \ge n+m$, then we have
\begin{multline}
\label{eq:rat-cauchy}
\sum_{\lambda \in \Par_n, \mu \in \Par_m}
 s_{\Lambda(\lambda,\mu)}(x_1, \dots, x_N) 
 s_\lambda(u_1, \dots, u_n) s_\mu(v_1, \dots, v_m)
\\
=
\frac{ \prod_{i=1}^n \prod_{j=1}^m (1 - u_i v_j) }
     { \prod_{i=1}^N \prod_{j=1}^n (1 - x_i u_j) \prod_{j=1}^m (1 - x_i^{-1} v_j) }.
\end{multline}
\end{prop}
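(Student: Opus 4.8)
The plan is to derive \eqref{eq:rat-cauchy} from the Cauchy--Binet formula, exactly as Proposition~\ref{prop:cauchy} is obtained, but now working with the full Laurent-type bialternant \eqref{prop:rat-cauchy} in which the exponents run over the two-sided sequence $\Lambda(\lambda,\mu)$. First I would write both sides over a common set of variables. On the right-hand side I expand each single-variable geometric-type factor: $\prod_{i}\prod_{j}(1-x_iu_j)^{-1}=\sum_{\lambda}s_\lambda(x)s_\lambda(u)$ is the ordinary dual Cauchy identity (here $\lambda$ is unrestricted for the moment), and likewise $\prod_i\prod_j(1-x_i^{-1}v_j)^{-1}=\sum_{\mu}s_\mu(x^{-1})s_\mu(v)$, where $x^{-1}=(x_1^{-1},\dots,x_N^{-1})$. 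The numerator $\prod_{i=1}^n\prod_{j=1}^m(1-u_iv_j)$ is a finite product that I would absorb using the dual (finite) Cauchy identity $\prod_{i,j}(1-u_iv_j)=\sum_{\nu}(-1)^{|\nu|}s_\nu(u)s_{\nu'}(v)$, or more efficiently I would recognize it as exactly the correction that turns the product $s_\lambda(u)s_\mu(v)$ back into a single $\mathrm{GL}_n$ (resp.\ $\mathrm{GL}_m$) character after the contragredient identification below.

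The cleaner route, which I would actually carry out, is the determinant one. Write the right-hand side of \eqref{eq:rat-cauchy} as a single $N\times N$ determinant via Cauchy's determinant identity applied to the combined variable set: group the $u$'s and the $v^{-1}$'s into one list $w=(u_1,\dots,u_n,v_1^{-1},\dots,v_m^{-1})$ of length $n+m\le N$, so that
\[
\frac{\prod_{i<j,\ i,j\le n}(u_i-u_j)\cdot\prod_{i<j,\ i,j\le m}(v_i^{-1}-v_j^{-1})\cdot\prod_{i\le n}\prod_{j\le m}(u_i-v_j^{-1})}{\prod_{i=1}^N\prod_{a}(1-x_i w_a)}
\]
becomes, after clearing the Vandermonde in the $x$'s and padding $w$ with $N-n-m$ zeros, a ratio of two $N\times N$ determinants; the bottom one is the Vandermonde $\det(x_i^{N-j})$ and the top one is $\det(x_i^{\Lambda_j+N-j})$ with $\Lambda$ ranging over weakly decreasing sequences of the shape $\Lambda(\lambda,\mu)$. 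Expanding the top determinant by multilinearity in the rows of the $w$-side reproduces $\sum_{\lambda,\mu}s_{\Lambda(\lambda,\mu)}(x)\,s_\lambda(u)s_\mu(v)$ once one tracks that the negative exponents $-\mu_{l(\mu)},\dots,-\mu_1$ are precisely what the variables $v_j^{-1}$ contribute and that $s_\mu$ evaluated on $(v_1^{-1},\dots,v_m^{-1})$ versus $(v_1,\dots,v_m)$ differ by the monomial $\prod v_j^{-\mu_j-\text{shift}}$, which is exactly compensated by the factor $\prod_{i\le n}\prod_{j\le m}(1-u_iv_j)$ in the numerator of \eqref{eq:rat-cauchy} after clearing denominators. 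I would state this bookkeeping as a lemma identifying $s_{\Lambda(\lambda,\mu)}$ with the ``mixed'' Cauchy--Binet minor.

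The main obstacle, and the place I would spend the most care, is the sign and index bookkeeping when splitting the single two-sided determinant $\det(x_i^{\Lambda_j+N-j})$ into its ``positive part'' (columns indexed by $\lambda$), its ``zero block'' (the middle $N-n-m$ columns with exponents $N-j$), and its ``negative part'' (columns with exponents $-\mu_i+N-j$): one must verify that the interleaving $\Lambda(\lambda,\mu)=(\lambda_1,\dots,\lambda_{l(\lambda)},0,\dots,0,-\mu_{l(\mu)},\dots,-\mu_1)$ is genuinely weakly decreasing (it is, since $\lambda_i\ge0\ge-\mu_j$), and that no cross terms survive — i.e.\ a column from the $u$-side can only land in a positive position and a column from the $v^{-1}$-side only in a negative position, because a $\lambda$-type row contributing a negative exponent would force $\lambda$ out of $\Par_n$, and similarly for $\mu$. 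Granting the restriction $N\ge n+m$, which guarantees the middle block is nonnegative in width and prevents overlap, the Cauchy--Binet expansion has no degenerate terms, and matching the Vandermonde factors $\prod_{i<j}(u_i-u_j)$, $\prod_{i<j}(v_i-v_j)$ with the denominators $\det(x_i^{N-j})$ of the generalized Schur functions yields \eqref{eq:rat-cauchy} directly. The remaining steps — expanding $(1-q^lx_iy_j)$-type products and substituting the principal specializations from Table~\ref{table:proof} into Proposition~\ref{prop:general} — are then routine in the manner of the proof of Theorem~\ref{thm:variant}.
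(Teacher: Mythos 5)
The paper itself offers no proof of this proposition: it is quoted as a known Cauchy-type identity (see the references to King and Koike at the head of Section~\ref{sec:-variants-q}), with only the remark before Proposition~\ref{prop:cauchy} that such identities ``can be proved by using the Cauchy--Binet formula.'' Your general direction --- a Cauchy--Binet expansion of a determinant --- is therefore the intended one, but the specific reduction you propose has a genuine gap. Identity \eqref{eq:rat-cauchy} is an identity of formal power series in $u$ and $v$ with Laurent-polynomial coefficients in $x$, so the factor $\prod_{j}(1-x_i^{-1}v_j)^{-1}$ must be expanded as $\sum_{b\ge 0}h_b(v)x_i^{-b}$, i.e.\ in \emph{negative} powers of $x_i$ and \emph{positive} powers of $v_j$. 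When you fold $u_1,\dots,u_n$ and $v_1^{-1},\dots,v_m^{-1}$ into one list $w$ and invoke the ordinary Cauchy determinant, the kernel $\prod_a(1-x_iw_a)^{-1}$ with $w_a=v_j^{-1}$ expands in positive powers of $x_i$ and negative powers of $v_j$; these are expansions of the same rational function in disjoint domains and are \emph{different} formal series. Passing between $\prod(1-x_iv_j^{-1})$ and $\prod(1-x_i^{-1}v_j)$ is therefore not a harmless monomial correction --- it changes which identity is being asserted. Relatedly, your bookkeeping claim that $s_\mu(v_1^{-1},\dots,v_m^{-1})$ and $s_\mu(v_1,\dots,v_m)$ ``differ by a monomial'' is false: one has $s_\mu(v_1^{-1},\dots,v_m^{-1})=(v_1\cdots v_m)^{-\mu_1}s_{\hat\mu}(v)$ for the \emph{complementary} partition $\hat\mu=(\mu_1-\mu_m,\dots,\mu_1-\mu_1)$, and the combined-list Cauchy expansion produces $s_\nu(u_1,\dots,u_n,v_1^{-1},\dots,v_m^{-1})$, which decomposes with Littlewood--Richardson coefficients rather than as a single product $s_\lambda(u)s_\mu(v)$. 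Finally, your justification that ``no cross terms survive'' (a $u$-column with a negative exponent ``would force $\lambda$ out of $\Par_n$'') is circular: that only exponent vectors of the form $\Lambda(\lambda,\mu)+(N-1,N-2,\dots,0)$ contribute is precisely what must be proved, and it comes from $h_k=0$ for $k<0$ together with the vanishing of minors of size greater than $n$ (resp.\ $m$) in $n$ (resp.\ $m$) variables --- this is where the hypothesis $N\ge n+m$ actually enters --- not from the definition of $\Par_n$.

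A correct version of the argument keeps the two expansions separate. Multiply \eqref{eq:rat-cauchy} by the Vandermonde $\det(x_i^{N-j})$, write $\prod_{j=1}^n(1-x_iu_j)^{-1}\prod_{j=1}^m(1-x_i^{-1}v_j)^{-1}=\sum_{k\in\Z}c_k(u;v)\,x_i^{k}$ with $c_k(u;v)=\sum_{a-b=k}h_a(u)h_b(v)$, and apply Cauchy--Binet to $\det\left(\sum_{k}c_k(u;v)\,x_i^{k+N-j}\right)$ to obtain $\sum_{\Lambda}\det(x_i^{\Lambda_j+N-j})\det\left(c_{\Lambda_i-i+j}(u;v)\right)_{1\le i,j\le N}$, summed over weakly decreasing $\Lambda\in\Z^N$. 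The proposition then reduces to the key lemma that $\prod_{i,j}(1-u_iv_j)\det\left(c_{\Lambda_i-i+j}(u;v)\right)$ equals $s_\lambda(u)s_\mu(v)$ when $\Lambda=\Lambda(\lambda,\mu)$ and vanishes otherwise; this follows from a second application of Cauchy--Binet to the factorization $c_k=\sum_{a-b=k}h_a(u)h_b(v)$ together with the Jacobi--Trudi formula in $n$ and in $m$ variables. Your sketch never reaches this lemma, and the shortcut it substitutes does not work as stated. (Your closing remarks about the $(1-q^lx_iy_j)$ factors and Table~\ref{table:proof} concern the application in Theorem~\ref{thm:rat-variant}, not this proposition.)
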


Also we need the following extension of Proposition~\ref{prop:general}.

\begin{prop}
\label{prop:general2}
Let $g$ be a function in $x_1, \dots, x_n, y_1, \dots, y_m$ satisfying 
the following four conditions:
\begin{enumerate}
\item[(a1)]
$g$ is symmetric in $x_1, \dots, x_n$;
\item[(a2)]
$g$ is symmetric in $y_1, \dots, y_m$;
\item[(b1)]
If $x_i = x_j$ for $i \neq j$, then $g(x_1, \dots, x_n, y_1, \dots, y_m) = 0$;
\item[(b2)]
If $y_i = y_j$ for $i \neq j$, then $g(x_1, \dots, x_n, y_1, \dots, y_m) = 0$.
\end{enumerate}
Then we have
\begin{multline}
\label{eq:general2}
\int_{[0,1]^{n+m}} g(x,y) d_qx d_qy
\\
 = 
(1-q)^{n+m} n! m!
\sum_{\lambda \in \Par_n, \mu \in \Par_m}
 q^{|\lambda| + |\mu| + \binom{n}{2} + \binom{m}{2}}
 g( q^{\lambda_1+n-1}, q^{\lambda_2+n-2}, \dots, q^{\lambda_n},
    q^{\mu_1+m-1}, q^{\mu_2+m-2}, \dots, q^{\mu_m} ).
\end{multline}
\end{prop}

\begin{proof}[Proof of Theorem~\ref{thm:rat-variant}]
The idea of the proof is similar to that of Theorem~\ref{thm:variant}.

If we put
\begin{multline*}
G(x_1, \dots, x_n, y_1, \dots, y_m)
\\
 =
 \prod_{i=1}^n (qx_i;q)_l
 \prod_{j=1}^m (qy_j;q)_l
 \prod_{i=1}^n \prod_{j=1}^m (1 - q^l x_i y_j)
 \prod_{1 \le i < j \le n} (x_j - x_i)
 \prod_{1 \le i < j \le m} (y_j - y_i),
\end{multline*}
then we have
\[
s_{\Lambda(\lambda,\mu)}(1, q, \dots, q^{n-1})
 =
q^{ -(N-1) |\mu| - \binom{n}{3} - \binom{m}{3} }
\frac{ \prod_{h=1}^{l-1} (q;q)_h }
     { \prod_{h=1}^{N-1} (q;q)_h }
\cdot
G(q^{\lambda_1+n-1}, \dots, q^{\lambda_n}).
\]
Hence, by applying Proposition~\ref{prop:general2}, we see that
\begin{multline*}
\int_{[0,1]^{n+m}}
 \prod_{i=1}^n x_i^r (qx_i;q)_l
 \prod_{j=1}^m y_j^s (qy_j;q)_l
 \prod_{i=1}^n \prod_{j=1}^m (1 - q^l x_i y_j)
 \prod_{1 \le i < j \le n} (x_j - x_i)^2
 \prod_{1 \le i < j \le m} (y_j - y_i)^2
 d_qx d_qy
\\
=
n! m! (1-q)^{n+m}
q^{ (r+1) \binom{n}{2} + (s+1) \binom{m}{2} + 2 \binom{n}{3} + 2 \binom{m}{3} }
\frac{ \prod_{h=1}^{N-1} (q;q)_h \prod_{h=1}^{n-1} (q;q)_h \prod_{h=1}^{m-1} (q;q)_h }
     { \prod_{h=1}^{l-1} (q;q)_h }
\\
\times
\sum_{\lambda \in \Par_n, \mu \in \Par_m}
 s_{\Lambda(\lambda,\mu)}(1, q, \dots, q^{N-1})
 \cdot q^{(r+1) |\lambda|} s_\lambda(1, q, \dots, q^{n-1})
 \cdot q^{(s+N) |\mu|} s_\mu(1, q, \dots ,q^{m-1}).
\end{multline*}
Now we can complete the proof by using the Cauchy-type identity \eqref{eq:rat-cauchy}.
\end{proof}


\end{document}